\newtheorem{theorem}{Theorem}
\newtheorem{lemma}[theorem]{Lemma}
\newtheorem{remark}[theorem]{Remark}
\def\qed{\vbox{\hrule
 \hbox{\vrule\hbox to 5pt{\vbox to 8pt{\vfil}\hfil}\vrule}\hrule}}
\journal{   }
\begin{document}
\begin{frontmatter}

\title{Some New Lower Bounds for the Estrada Index}

\author[]{Juan L. Aguayo and Juan R. Carmona\corref{cor1}}
\ead{juanaguayo@uach.cl, juan.carmona@uach.cl}
\address{Instituto de Ciencias F\'isicas y Matem\'aticas,\\
  Universidad Austral de Chile, Independencia 631, Valdivia, Chile}
\cortext[cor1]{Corresponding author}

\author{Jonnathan Rodr\'{\i}guez}
\ead{jonnathan.rodriguez@uantof.cl}
\address{Departamento de Matem\'{a}ticas, Facultad de Ciencias B\'asicas\\
Universidad de Antofagasta, Antofagasta, Chile.}

\begin{abstract}
Let $G$ be a graph on $n$ vertices and $\lambda_1,\lambda_2,\ldots,\lambda_n$ its  eigenvalues. The Estrada index of $G$ is an invariant that is calculated from the eigenvalues of the adjacency matrix of a graph.  In this paper, we present some new lower bounds obtained for the Estrada Index of graphs and in particular of bipartite graphs  that only depend on the number of vertices, the number of edges, Randi\'c index, maximum and minimum degree and diameter.
\end{abstract}

\begin{keyword}

Estrada Index; Adjacency matrix; Lower bound; Randi\'c Index; Graph.
\MSC 05C50 \sep 15A18

\end{keyword}

\end{frontmatter}


\section{Introduction}

In this paper, we consider undirected simple graphs $G$ with by edge set denoted by $\mathcal{E}(G)$ and its vertex set $V(G)=\{1, \ldots, n \}$ with cardinality $m$ and $n$, respectively. We say that $G$ is an
$(n,m)$ graph.
	If $ e \in \mathcal{E}(G)$ has end vertices $i $ and $j$, then we say that $i$ and $j$ are adjacent and this edge is denoted by $ij$. For a finite set $U$, $|U|$ denotes its cardinality.
	Let $K_n$ be the complete graph with $n$ vertices and $\overline{K_n}$ its (edgeless) complement. A graph $G$ is bipartite if there exists a partitioning of $V(G)$ into disjoint, nonempty sets $V_1$ and $V_2$ such that the end vertices of each edge in $G$ are in distinct sets $V_1$, $V_2$. A graph $G$ is a complete bipartite graph if $G$ is bipartite and each vertex in $V_1$ is connected to all the vertices in $V_2$. If $|V_1|=p$ and $|V_2|=q$, the complete bipartite graph is denoted by $K_{p,q}$. For more properties of bipartite graphs, see \cite{haemers}.\\
	
	If $i \in V(G)$, then $NG(i)$ is the set of neighbors of the vertex $i$ in $G$, that is, $NG(i) = \{j \in V(G): ij \in \mathcal{E}(G)\}$. For the $i$-th vertex of $G$, the cardinality of $NG(i)$ is called the degree of $i$ and it is denoted by $d(i)$. The maximum vertex degree is denoted by $\Delta$ and the minimum vertex degree of $G$, is denoted by $\delta$.
	The general Randi\'c index $R_{\alpha}(G)$ of $G$, is defined as $$R_{\alpha} =\sum_{ij \in \mathcal{E}(G)} ( d(i) d(j))^{\alpha},$$ where $\alpha$ is ar arbitary real number. Si $\alpha=-1/2$ we obtain the Randi\'c index or connectivity index or branching index, denoted by $R$ (see \cite{R1}). Let $i, j \in  V(G)$, a walk of $G$ from $i$ to $j$ is a finite alternating sequence $i_0 (= i ) e_1 i_1 e_2 i_2 \ldots e_{k-1}i_{k-1} e_k i_k (= j )$ of vertices and edges such that $e_r = i _{r−1}i_{r}$ for $r = 1 , 2 ,\ldots, k $. The number $k$ is the length of the walk. In particular, if the vertex $i_r$ , $r = 0 , 1 ,\ldots, k$, in the walk are all distinct then the walk is called a path. The path graph of $n$ vertices is denoted by $P_n $. A closed path or cycle , is a path $i_1,\ldots, i_k$ (where $k \geq3$) together with the edge $i_1 i_k$ . The cycle graph of $n$ vertices is denoted by $C_n$. If each pair of vertices in a graph is joined by a walk, the graph is said to be connected,  in this case, we say that $A(G)$ is irreducible. The distance between two vertices $i$ and $j$, denoted by $d(i, j)$, is the number of edges of a shortest path between $i$ and $j$, and its maximum value over all pair of vertices is called diameter of the graph $G$, is that, $$D = diam(G) = \max\{d(i,j):i,j \in V(G)\}).$$
	The  adjacency  matrix $A(G)$ of  the  graph $G$ is  a 
	symmetric matrix of order $n$ with entries $a_{ij}$, such that $a_{ij}=1$ if $ij \in \mathcal{E}(G)$ and $a_{ij}=0$ otherwise.  
	Denoted by $\lambda_1\geq \ldots \geq \lambda_n$ to the eigenvalues of $A(G)$,  
	see \cite{C-D-S1, C-D-S2}. A matrix is singular if it has zero as an eigenvalue, otherwise, it is called non-singular. 
    

	\noindent The Estrada index of the graph $G$ is defined as
	$$EE(G)= \sum_{i=1}^n e^{\lambda_i}.$$
	This spectral quantity is put forward by Ernesto Estrada \cite{E1} in the year 2000.
	There have been found a lot of chemical and physical applications, including quantifying the degree of folding of long-chain proteins, \cite{E1, E2, E3, G-F-G-M-V, G-R-F-M-S, G-G-M-S}, 
	and complex networks \cite{E4, E5, S1, S2, S3, S4}. 
	Mathematical properties of this invariant can be found in e.g. \cite{F-A-G,G-D-R,G-R,K,S5,Z,Z-Z}. \\


	Denote by $M_k = M_k(G)$ to the $k$-th spectral moment of the graph $G$, i.e.,
	$$M_k= \displaystyle \sum_{i=1}^n(\lambda_i)^k.$$
	Then, we can write the Estrada index as
	$$EE(G) = \sum_{k=0}^{\infty}\frac{M_k}{k!}.$$
	In \cite{C-D-S1}, for an $(n, m)$-graph $G$, the authors proved that
	\begin{equation}\label{eq1}
	M_0 =n,\,\, M_1 =0,\,\, M_2 =2m,\,\, M_3 =6t,
	\end{equation}
	where $t$ is the number of triangles in $G$.\\
	

    


\noindent	In this work, under motivation of paper \cite{KarJar}, we obtain new lower bounds for the Estrada index of graphs and bipartite graphs in terms of number of vertices, number of edges, Randi\'c index, maximum and minimum degree and diameter.\\


\section{Lemmas for new lower bounds for the Estrada index}

We list some known results that will be needed in the following sections.\\

\begin{lemma}[D. Cvetkovi\'c, M. Doob, H. Sachs \cite{C-D-S1}]\label{lema1}
Let $G$ be a connected graph with $m$ edges, $n$ vertices, then
$$\lambda_1\geq \frac{2m}{n}.$$
\end{lemma}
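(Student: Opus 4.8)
The plan is to invoke the variational (Rayleigh--Ritz) characterization of the largest eigenvalue of a real symmetric matrix: since $A(G)$ is symmetric,
\[
\lambda_1 = \max_{x \in \mathbb{R}^n \setminus \{0\}} \frac{x^{\top} A(G)\, x}{x^{\top} x}.
\]
The key step is simply to feed the all-ones vector $\mathbf{1} = (1,1,\ldots,1)^{\top}$ into this quotient. Then $x^{\top} x = n$, while $x^{\top} A(G)\, x = \sum_{i,j} a_{ij}$, and since each edge $ij \in \mathcal{E}(G)$ contributes exactly $a_{ij} + a_{ji} = 2$ to this sum, we get $\mathbf{1}^{\top} A(G)\, \mathbf{1} = 2m$. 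Hence $\lambda_1 \ge \dfrac{2m}{n}$, which is the claimed inequality.

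I would carry this out in three short moves: (1) state the Rayleigh quotient bound $\lambda_1 \ge \frac{x^{\top} A x}{x^{\top} x}$ valid for every nonzero $x$ (this is standard for symmetric matrices and may be cited alongside \cite{C-D-S1}); (2) substitute $x = \mathbf{1}$ and compute the numerator as $\sum_{i=1}^n d(i) = 2m$ — equivalently, observe that $A(G)\mathbf{1}$ is the degree vector, so $\mathbf{1}^{\top}A(G)\mathbf{1} = \sum_i d(i)$, and this equals $2m$ by the handshake lemma; (3) divide by $\mathbf{1}^{\top}\mathbf{1} = n$ and conclude.

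There is essentially no serious obstacle here: the statement is classical and the argument is a one-line application of the Rayleigh principle. The only point deserving a remark is that the connectedness hypothesis is not actually needed for this particular inequality — it holds for any $(n,m)$-graph — though connectedness (via the Perron--Frobenius theorem, $A(G)$ being irreducible and nonnegative) would give the alternative and slightly stronger fact that $\lambda_1$ is simple and strictly exceeds the average degree unless $G$ is regular. For the purposes of the later bounds on $EE(G)$, the stated form $\lambda_1 \ge \tfrac{2m}{n}$ suffices, so I would present just the Rayleigh-quotient computation and, if desired, append the Perron--Frobenius remark as a comment rather than as part of the proof.
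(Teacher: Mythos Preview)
Your argument via the Rayleigh quotient with the all-ones vector is correct and is the standard proof of this classical inequality. Note, however, that the paper itself does not prove Lemma~\ref{lema1}: it is simply quoted from \cite{C-D-S1} as a known result, so there is no ``paper's own proof'' to compare against; your write-up would serve perfectly well as a self-contained justification if one were wanted.
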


\begin{lemma}[Bollob\'as, Erd\"os \cite{be98}]\label{lema2}
Let $G$ be a non-trivial graph with $n$ vertices, then
$$R\leq \frac n2.$$
\end{lemma}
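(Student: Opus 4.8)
The plan is to reduce the claim to a per-edge estimate and then reorganise the Randi\'c sum vertex by vertex; this is the classical Bollob\'as--Erd\"os argument. Since $G$ is non-trivial it has at least one edge, so the sum $R=\sum_{ij\in\mathcal{E}(G)}(d(i)d(j))^{-1/2}$ is nonempty, and every vertex occurring in it has degree at least $1$; hence all the reciprocals appearing below are well defined.

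First I would apply the arithmetic--geometric mean inequality to the positive reals $1/d(i)$ and $1/d(j)$ associated with an edge $ij$, which gives
$$\frac{1}{\sqrt{d(i)d(j)}}=\sqrt{\frac{1}{d(i)}\cdot\frac{1}{d(j)}}\le\frac12\left(\frac{1}{d(i)}+\frac{1}{d(j)}\right).$$
Summing over all edges yields $R\le\frac12\sum_{ij\in\mathcal{E}(G)}\left(\frac{1}{d(i)}+\frac{1}{d(j)}\right)$. Next I would interchange the order of summation: in this double sum the term $1/d(v)$ occurs exactly once for each edge incident with $v$, that is, $d(v)$ times, so
$$\sum_{ij\in\mathcal{E}(G)}\left(\frac{1}{d(i)}+\frac{1}{d(j)}\right)=\sum_{v:\,d(v)\ge 1}d(v)\cdot\frac{1}{d(v)}=\left|\{v\in V(G):d(v)\ge 1\}\right|\le n.$$
Combining the two displays gives $R\le n/2$, as asserted.

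I do not expect any genuine obstacle: the proof is a two-line computation once the right per-edge bound is identified. The only points deserving a word of care are that isolated vertices simply do not enter the vertex sum (which only helps the inequality), and that, if one also wants the extremal case, equality in the arithmetic--geometric mean step forces $d(i)=d(j)$ on every edge --- so, by connectedness, every component of $G$ is regular --- while the final inequality is tight only when $G$ has no isolated vertex; thus $R=n/2$ precisely when $G$ is a disjoint union of regular graphs each with at least one edge. I would present the inequality $R\le n/2$ in the form stated and relegate this equality discussion to a remark, since only the bound itself is used later.
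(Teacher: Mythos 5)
Your argument is correct: the per-edge AM--GM bound $\frac{1}{\sqrt{d(i)d(j)}}\le\frac12\bigl(\frac{1}{d(i)}+\frac{1}{d(j)}\bigr)$ followed by regrouping the double sum vertex by vertex, so that each non-isolated vertex contributes exactly $d(v)\cdot\frac{1}{d(v)}=1$, gives $R\le\frac{n}{2}$ with no gaps, and your equality discussion (every edge joins vertices of equal degree and no isolated vertices, i.e.\ a disjoint union of regular components each containing an edge) is also accurate. Note that the paper does not prove this lemma at all --- it is quoted from Bollob\'as--Erd\H{o}s as a known result --- so there is no in-paper proof to compare against; your write-up is the standard self-contained derivation of that cited bound and could stand as a proof of the lemma as stated.
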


\begin{lemma}[Favaron, Mah\'eo, J.-F. Sacl\'e \cite{fms93}]\label{lema3}
Let $G$ be a connected graph with $m$ edges, then
$$\lambda_1\geq \frac{m}{R}.$$
\end{lemma}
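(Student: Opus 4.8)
The plan is to combine the variational (Rayleigh--quotient) characterization of the largest eigenvalue, $\lambda_1=\max_{x\neq 0}\frac{x^{\top}A(G)\,x}{x^{\top}x}$, with one carefully chosen test vector and a single application of the Cauchy--Schwarz inequality. I may as well assume $m\geq 1$, since otherwise $R=0$ and the statement is vacuous.

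First I would feed into the Rayleigh quotient the vector $x\in\mathbb{R}^{n}$ with coordinates $x_i=\sqrt{d(i)}$; this is a legitimate nonzero vector, since $G$ is connected and non-trivial, so every $d(i)\geq 1$. Then $x^{\top}x=\sum_{i=1}^{n}d(i)=2m$, while
$$x^{\top}A(G)\,x=\sum_{i,j}a_{ij}\sqrt{d(i)d(j)}=2\sum_{ij\in\mathcal{E}(G)}\sqrt{d(i)d(j)},$$
each edge being counted once for each of its two orientations. Hence
$$\lambda_1\;\geq\;\frac{x^{\top}A(G)\,x}{x^{\top}x}\;=\;\frac{1}{m}\sum_{ij\in\mathcal{E}(G)}\sqrt{d(i)d(j)}.$$

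Next I would bound the edge sum below by $m^{2}/R$. Writing $1=(d(i)d(j))^{1/4}\cdot(d(i)d(j))^{-1/4}$ on each edge and applying Cauchy--Schwarz over $\mathcal{E}(G)$ gives
$$m^{2}=\Bigl(\sum_{ij\in\mathcal{E}(G)}1\Bigr)^{2}\leq\Bigl(\sum_{ij\in\mathcal{E}(G)}\sqrt{d(i)d(j)}\Bigr)\Bigl(\sum_{ij\in\mathcal{E}(G)}\frac{1}{\sqrt{d(i)d(j)}}\Bigr)=R\sum_{ij\in\mathcal{E}(G)}\sqrt{d(i)d(j)},$$
so $\sum_{ij\in\mathcal{E}(G)}\sqrt{d(i)d(j)}\geq m^{2}/R$. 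Substituting into the previous display yields $\lambda_1\geq\frac{1}{m}\cdot\frac{m^{2}}{R}=\frac{m}{R}$, which is the assertion. The only step requiring any insight is the choice of the weight vector $x_i=\sqrt{d(i)}$ and the matching Cauchy--Schwarz split; the rest is bookkeeping. As a consistency check, both inequalities are tight exactly for regular graphs, where $d(i)\equiv 2m/n$, $R=n/2$, and the bound collapses to $\lambda_1\geq 2m/n$, in agreement with Lemma~\ref{lema1}.
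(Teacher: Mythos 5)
The paper does not prove Lemma \ref{lema3}; it is quoted directly from Favaron, Mah\'eo and Sacl\'e \cite{fms93}, so there is no internal argument to compare yours against. Your proof is correct and self-contained: with the test vector $x_i=\sqrt{d(i)}$ the Rayleigh quotient gives $\lambda_1\geq \frac{1}{m}\sum_{ij\in\mathcal{E}(G)}\sqrt{d(i)d(j)}=\frac{R_{1/2}}{m}$, which is exactly Lemma \ref{lema5}, and your Cauchy--Schwarz step $m^{2}\leq R_{1/2}\,R$ then yields $\lambda_1\geq m/R$; in effect you have derived Lemma \ref{lema3} as a consequence of Lemma \ref{lema5}, a dependence the paper does not point out. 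Two minor remarks, neither affecting validity. First, for a connected graph $m=0$ forces $G\cong K_1$, where $m/R$ is the indeterminate $0/0$ rather than a vacuous inequality, so it is cleaner to say the lemma implicitly assumes $m\geq 1$. Second, your closing aside that both inequalities are tight \emph{exactly} for regular graphs overstates the equality case: $x_i=\sqrt{d(i)}$ is also the Perron vector of any semiregular bipartite graph, and for $K_{p,q}$ one has $\lambda_1=\sqrt{pq}=m/R$, so the bound is attained there as well --- indeed this is precisely the equality case exploited later in Theorem \ref{T20}.
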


\begin{lemma}[Favaron, Mah\'eo, J.-F. Sacl\'e \cite{fms93}]\label{lema4}
Let G be a non-empty graph with maximum vertex degrees $\Delta$. Then 
$$\lambda_1\geq \sqrt{\Delta},$$
\end{lemma}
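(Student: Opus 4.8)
The plan is to rely on the Rayleigh--Ritz characterization of the largest eigenvalue of the symmetric matrix $A(G)$, namely $\lambda_1 = \max_{x\neq 0}\frac{x^\top A(G)x}{x^\top x}$, and to exhibit a single well-chosen test vector that already forces the bound. Since $G$ is non-empty we have $\Delta\geq 1$; fix a vertex $v\in V(G)$ with $d(v)=\Delta$ and let $u_1,\ldots,u_\Delta$ denote its neighbors.

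First I would define the test vector $x\in\mathbb{R}^n$ by $x_v=\sqrt{\Delta}$, $x_{u_k}=1$ for $k=1,\ldots,\Delta$, and $x_i=0$ for every remaining vertex. This gives $x^\top x=\Delta+\Delta=2\Delta$. Next I would estimate the quadratic form: writing $x^\top A(G)x=2\sum_{ij\in\mathcal{E}(G)}x_i x_j$, every summand is non-negative because all entries of $x$ are non-negative, so the total is at least the contribution of the $\Delta$ edges $vu_k$, which equals $2\sum_{k=1}^{\Delta}x_v x_{u_k}=2\Delta\sqrt{\Delta}$. Combining, $\lambda_1\geq \frac{2\Delta\sqrt{\Delta}}{2\Delta}=\sqrt{\Delta}$, as claimed.

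There is essentially no obstacle here: the only points that need a line of justification are that $A(G)$ is symmetric so that Rayleigh--Ritz applies, and that dropping the edges not incident to $v$ can only decrease $x^\top A(G)x$ precisely because $x$ is componentwise non-negative. An equally short alternative is to observe that the star $K_{1,\Delta}$ occurs as a subgraph of $G$ (on $\{v,u_1,\ldots,u_\Delta\}$), that its adjacency spectrum is $\{\sqrt{\Delta},-\sqrt{\Delta},0^{(\Delta-1)}\}$, and that the largest adjacency eigenvalue is monotone under passing to a subgraph (padding $A(K_{1,\Delta})$ with zeros produces a non-negative matrix dominated entrywise by $A(G)$, so Perron--Frobenius yields $\lambda_1(G)\geq\lambda_1(K_{1,\Delta})$). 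I would present the Rayleigh-quotient version as the main proof since it is entirely self-contained and requires no appeal to interlacing.
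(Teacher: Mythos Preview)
Your argument is correct. The Rayleigh--Ritz computation with the test vector supported on a vertex of maximum degree and its neighbourhood is the standard elementary proof, and every step you wrote checks out: $x^\top x=2\Delta$, the quadratic form is bounded below by the contribution of the $\Delta$ edges incident to $v$ because $x\geq 0$ entrywise, and the ratio gives $\sqrt{\Delta}$. The alternative via the star subgraph and Perron--Frobenius monotonicity is also fine.

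As for comparison with the paper: there is nothing to compare. The paper does not prove this lemma at all; it is merely quoted from Favaron, Mah\'eo and Sacl\'e \cite{fms93} in the preliminary section listing known results needed later. Your write-up therefore supplies strictly more than the paper does on this point.
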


\begin{lemma}[Favaron, Mah\'eo, J.-F. Sacl\'e \cite{fms93}]\label{lema5}
If $G$ is a graph with $n$ vertices, $m$ edges, and degree sequence $d_1,d_2,\ldots,d_n$, then
$$\lambda_1\geq \frac{R_{1/2}}{m}$$
\end{lemma}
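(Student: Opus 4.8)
The plan is to use the Rayleigh--Ritz variational characterization of the largest eigenvalue of a real symmetric matrix, namely
$$\lambda_1=\max_{x\neq 0}\frac{x^{\top}A(G)x}{x^{\top}x}.$$
Once this is invoked, the whole argument reduces to choosing a single good test vector $x$ and evaluating the quadratic form on it; no spectral machinery beyond this bound is needed.

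First I would take $x=(\sqrt{d_1},\sqrt{d_2},\ldots,\sqrt{d_n})^{\top}$, the vector whose $i$-th entry is the square root of the degree of vertex $i$ (this is a nonzero vector as soon as $G$ has at least one edge, which we may assume since otherwise the bound is trivial). Then the denominator is immediate from the handshake lemma: $x^{\top}x=\sum_{i=1}^n d_i = 2m$. For the numerator, since $x^{\top}A(G)x=\sum_{i,j}a_{ij}x_ix_j$ and each edge $ij\in\mathcal{E}(G)$ contributes the term $x_ix_j=\sqrt{d_i}\sqrt{d_j}$ exactly twice (once as the ordered pair $(i,j)$ and once as $(j,i)$), we get $x^{\top}A(G)x = 2\sum_{ij\in\mathcal{E}(G)}(d_id_j)^{1/2}=2R_{1/2}$.

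Substituting these two evaluations into the Rayleigh quotient yields $\lambda_1\geq \dfrac{2R_{1/2}}{2m}=\dfrac{R_{1/2}}{m}$, which is the desired inequality.

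I do not expect any genuine obstacle here; the only ``insight'' is recognizing that the degree-square-root vector is precisely the test vector that forces both the numerator and the denominator to collapse to $R_{1/2}$ and $m$ (up to the common factor $2$). The points to be careful about are the handshake identity $\sum_i d_i=2m$ and the doubling of each edge term in the bilinear form $x^{\top}A(G)x$, together with the tacit assumption that $G$ is non-empty so the test vector is nonzero and the statement is non-vacuous.
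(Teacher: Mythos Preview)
Your argument is correct: the Rayleigh--Ritz inequality applied to the test vector $x=(\sqrt{d_1},\ldots,\sqrt{d_n})^{\top}$ gives $x^{\top}x=2m$ and $x^{\top}A(G)x=2R_{1/2}$, hence $\lambda_1\ge R_{1/2}/m$. Note, however, that the paper does not supply its own proof of this lemma; it is simply quoted from Favaron, Mah\'eo and Sacl\'e \cite{fms93}, so there is nothing in the paper to compare against. For what it is worth, the proof you wrote is essentially the original one from \cite{fms93}.
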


\begin{lemma}[Cvetkovi\'c, Doob, Sachs \cite{C-D-S1}]\label{lema6}
$G$ has only one distinct eigenvalue if and only if $G$ is an empty graph. $G$ has two distinct eigenvalues $\mu_1 > \mu_2$
with multiplicities $m_1$ and $m_2$ if and only if $G$ is the direct sum of $m_1$ complete graphs of order $\mu_1 + 1$. In this case, $\mu_2=−1$ and $m_2=m_1\mu_1$.
\end{lemma}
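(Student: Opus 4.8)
\medskip
\noindent\textbf{Proof strategy.}
The plan is to use the classical principle that a graph whose adjacency matrix has very few distinct eigenvalues must satisfy a low-degree polynomial identity, and then to extract combinatorial information by comparing the entries of the relevant powers of $A=A(G)$.

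For the first assertion, if $A$ has a single eigenvalue $\mu$ then, $A$ being real symmetric and hence diagonalizable, we must have $A=\mu I$; since every diagonal entry of an adjacency matrix vanishes, $\mu=0$ and $A=0$, so $G=\overline{K_n}$. The converse is trivial. For the second assertion, suppose $A$ has exactly two distinct eigenvalues $\mu_1>\mu_2$. Then its minimal polynomial is $(x-\mu_1)(x-\mu_2)$, so
$$A^2-(\mu_1+\mu_2)A+\mu_1\mu_2 I=0 .$$
First I would compare diagonal entries: since $(A^2)_{ii}=d(i)$ and $A_{ii}=0$, this forces $d(i)=-\mu_1\mu_2$ for every $i$, i.e. $G$ is $r$-regular with $r=-\mu_1\mu_2$ (and $r\ge 1$, otherwise $A=0$ has a single eigenvalue). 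Because $G$ is $r$-regular, the all-ones vector is an eigenvector for the eigenvalue $r$, and together with the standard bound $\lambda_1\le\Delta=r$ (Perron--Frobenius, or a Rayleigh-quotient argument) this gives $\mu_1=r$ and hence $\mu_2=-1$. Next I would compare off-diagonal entries: $(A^2)_{ij}$ counts the common neighbours of $i$ and $j$, so two adjacent vertices have exactly $r-1$ common neighbours and two non-adjacent vertices have none.

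The structure of $G$ now follows. Fix a vertex $i$; any two distinct neighbours $j,k$ of $i$ share the common neighbour $i$, hence, having a nonzero number of common neighbours, must be adjacent, so $\{i\}\cup N_G(i)$ induces a copy of $K_{r+1}$. Every vertex of this clique already has degree $r$ inside it, so no edge leaves the clique, and it is therefore a whole connected component; hence each component of $G$ is a $K_{r+1}=K_{\mu_1+1}$. If $G$ has $m_1$ components, then $\mu_1$ occurs once per component so its multiplicity is $m_1$; the order is $n=m_1(\mu_1+1)$, whence the multiplicity of $-1$ is $m_2=n-m_1=m_1\mu_1$. Conversely, the spectrum of a disjoint union of $m_1$ copies of $K_{\mu_1+1}$ is precisely $\mu_1$ with multiplicity $m_1$ and $-1$ with multiplicity $m_1\mu_1$, which are two distinct eigenvalues, closing the equivalence. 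The step I expect to be most delicate is the clique-closure argument: one must invoke $r$-regularity to conclude that, once $\{i\}\cup N_G(i)$ is known to be a clique, it can have no edges to the rest of the graph, which is exactly what pins each component down to a single complete graph.
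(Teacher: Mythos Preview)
The paper does not give its own proof of this lemma; it is quoted verbatim from Cvetkovi\'c--Doob--Sachs \cite{C-D-S1} and used as a black box. Your argument is correct and is essentially the classical proof one finds in that reference: pass to the minimal polynomial $A^2=(\mu_1+\mu_2)A-\mu_1\mu_2 I$, read off regularity from the diagonal, identify $\mu_1=r$ and $\mu_2=-1$, and use the off-diagonal entries to force each closed neighbourhood to be a $K_{r+1}$ that saturates all degrees and hence is a full component. There is nothing to compare against in the present paper, and no gap in what you wrote.
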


\begin{lemma}[Stevanovic \cite{S}]\label{lema7}
If $G$ is a connected graph with $n$ vertices and diameter $D$. Then $$\lambda_1\geq \sqrt[D]{n-1}.$$
\end{lemma}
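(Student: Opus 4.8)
My plan is to split the bound into a spectral half and a combinatorial half, joined through walk counts; recall that for every $k\ge 0$ the $(i,j)$ entry of $A^k=A(G)^k$ is the number of walks of length $k$ from $i$ to $j$. For the spectral half: since $G$ is connected, $A=A(G)$ is irreducible, so by Perron--Frobenius $\lambda_1$ has a strictly positive eigenvector $x>0$ with $Ax=\lambda_1 x$, hence $A^Dx=\lambda_1^D x$. Let $v$ be a vertex at which $x$ is smallest. Reading off the $v$-th coordinate of $A^Dx=\lambda_1^Dx$,
$$
\lambda_1^D x_v=\sum_{j=1}^n (A^D)_{vj}\,x_j\ \ge\ x_v\sum_{j=1}^n (A^D)_{vj},
$$
since $(A^D)_{vj}\ge 0$ and $x_j\ge x_v$ for every $j$. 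Dividing by $x_v>0$ gives $\lambda_1^D\ge W_D(v)$, where $W_D(v):=\sum_j(A^D)_{vj}$ is the number of walks of length $D$ starting at $v$. (Here $n\ge 2$, so $D\ge 1$ and the $D$-th root makes sense.)

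So it suffices to prove the combinatorial statement: in a connected graph of diameter $D$, every vertex $v$ is the starting point of at least $n-1$ walks of length $D$. I would prove this by exhibiting an injection from $V(G)\setminus\{v\}$ into the set of such walks. Given $j\ne v$, set $d:=d(v,j)$, so $1\le d\le D$, fix a shortest $v$--$j$ path $v=u_0,u_1,\dots,u_d=j$, and let $W_j$ be this path followed, for the remaining $D-d$ steps, by the oscillation $u_d\to u_{d-1}\to u_d\to u_{d-1}\to\cdots$ along its last edge; this is a walk of length exactly $D$ starting at $v$. As one traverses $W_j$, the distance to $v$ of the current vertex equals $0,1,\dots,d$ at positions $0,1,\dots,d$ and thereafter alternates between $d-1$ and $d$; hence this running distance attains its maximum value $d$ for the first time at position $d$, where $W_j$ sits at $j$. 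Since that position and that vertex are recoverable from the walk alone, $j\mapsto W_j$ is injective, and therefore $W_D(v)\ge n-1$.

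Combining the two halves yields $\lambda_1^D\ge n-1$, i.e. $\lambda_1\ge\sqrt[D]{\,n-1\,}$. The genuine obstacle is the combinatorial half: because of parity one cannot in general reach a prescribed vertex by a walk of length exactly $D$ (e.g. in bipartite graphs only vertices $j$ with $d(v,j)\equiv D\pmod 2$ are reachable), so one cannot simply send $j$ to a walk that ends at $j$; the fix is to \emph{encode} $j$ inside the walk $W_j$ via the instant at which its running distance from $v$ first peaks. Once this encoding is in place, injectivity is immediate and no estimates are needed, and one checks that equality holds for $K_n$ ($D=1$) and for the stars $K_{1,n-1}$ ($D=2$).
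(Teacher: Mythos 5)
Your proof is correct. Note that the paper does not prove this lemma at all -- it is quoted from Stevanovic's book \cite{S} and used as a black box -- so there is no in-paper argument to compare against; what you have written is a valid self-contained proof of the cited result. Both halves check out: the spectral half is the standard Perron--Frobenius ``minimum row sum'' estimate applied to $A^D$ (evaluating $A^Dx=\lambda_1^Dx$ at a coordinate where the positive eigenvector is minimal, which needs only $(A^D)_{vj}\ge 0$ and $x_j\ge x_v$), and the combinatorial half correctly sidesteps the parity obstruction you identify: one cannot ask the walk to \emph{end} at $j$, but your walk $W_j$ (shortest path to $j$, then oscillation on its last edge) encodes $j$ unambiguously as the vertex occupied at the first moment the running distance from $v$ attains its maximum, so $j\mapsto W_j$ is injective into the walks of length $D$ from $v$, giving $W_D(v)\ge n-1$ and hence $\lambda_1^D\ge n-1$. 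The degenerate cases are handled ($d=D$ gives no oscillation but the decoding still works; $j\ne v$ guarantees a last edge exists; $n\ge 2$ gives $D\ge 1$), and your equality checks ($K_n$ with $D=1$, stars with $D=2$) are consistent with the bound being sharp.
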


\begin{lemma}[Das, Mojallal \cite{dm13}]\label{lema8}
If $G$ is a graph with $n$ vertices, $m$ edges, and minimum degree $\delta$. Then
$$ \lambda_1\geq\frac{2(m-\delta)}{n-1}.$$

\end{lemma}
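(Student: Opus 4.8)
The plan is to prove the bound directly from the Rayleigh quotient characterization $\lambda_1 = \max_{x \neq 0} \frac{x^{\top} A(G) x}{x^{\top} x}$, so that a single well-chosen test vector does all the work. The vector I would use is the one that vanishes at a vertex of minimum degree and equals $1$ everywhere else; this is the natural candidate because it isolates, in the numerator, exactly the edges \emph{not} incident to that vertex.

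Concretely, let $v \in V(G)$ be a vertex with $d(v) = \delta$, and define $x \in \mathbb{R}^n$ by $x_v = 0$ and $x_i = 1$ for every $i \neq v$. Then $x^{\top} x = n - 1$, and since $x^{\top} A(G) x = 2\sum_{ij \in \mathcal{E}(G)} x_i x_j$, every edge incident with $v$ contributes $0$ while every other edge contributes $1$; as exactly $\delta$ edges meet $v$, this gives $x^{\top} A(G) x = 2(m - \delta)$. Substituting into the Rayleigh quotient yields
$$\lambda_1 \geq \frac{x^{\top} A(G) x}{x^{\top} x} = \frac{2(m-\delta)}{n-1},$$
which is the claim (here one needs $n \geq 2$ so that $x \neq 0$ and the denominator is positive).

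There is no real obstacle; the only step to carry out carefully is the edge count in $x^{\top} A(G) x$, namely the observation that zeroing the weight at $v$ kills precisely the $\delta$ edges at $v$ and leaves the remaining $m - \delta$ edges each contributing $1$. An alternative packaging, if one prefers to quote a named result, is via Cauchy interlacing: $A(G - v)$ is a principal submatrix of $A(G)$, so $\lambda_1(G) \geq \lambda_1(G - v)$, and $G - v$ has $n - 1$ vertices and $m - \delta$ edges, whence $\lambda_1(G - v) \geq \frac{2(m-\delta)}{n-1}$ by the average-degree bound. Note that the inequality $\lambda_1 \geq 2m/n$ holds for an arbitrary graph, connected or not, since it follows from testing the Rayleigh quotient with the all-ones vector — this is why no connectivity hypothesis is required. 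Either route is short, and the entire content lies in the choice of test vector, equivalently the choice of deleted vertex.
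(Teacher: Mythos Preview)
Your argument is correct: the Rayleigh-quotient test vector that vanishes at a minimum-degree vertex and equals $1$ elsewhere gives exactly $x^{\top}A(G)x = 2(m-\delta)$ and $x^{\top}x = n-1$, and the interlacing variant is an equivalent repackaging. There is nothing to compare against in the paper itself, however: Lemma~\ref{lema8} is not proved there but simply quoted from Das and Mojallal \cite{dm13} as one of several preliminary results, so the paper contains no proof of this statement. Your write-up would serve perfectly well as a self-contained justification should one be desired.
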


\begin{remark}\label{obs8}

The case of equality is not discussed in \cite{dm13}. In the development of our work, we hope that a family that fulfills equality is the following
$$\Gamma=\{G \cup K_1 / G \ \text{is} \ r-\text{regular}\}.$$
Indeed, if $G=(n.m)$ we have to, $G \cup K_1$ is obtained
$$\dfrac{2(\frac{nr}{2}-0)}{n}=r=\lambda_1(G \cup K_1).$$
\end{remark}
\begin{lemma}[Hong \cite{h86}]\label{lema9}
If $G$ is a connected unicyclic graph, then
$$ \lambda_1\geq 2,$$
with equality if and only if $G$ is a cycle $C_n$.
\end{lemma}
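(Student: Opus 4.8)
The plan is to combine monotonicity of the spectral radius under passing to subgraphs with the known spectrum of a cycle. First I would record the elementary fact that if $H$ is a subgraph of a graph $G$ (on the same number of vertices or fewer), then $\lambda_1(H)\le\lambda_1(G)$: after padding $A(H)$ with zero rows and columns one obtains a nonnegative matrix that is dominated entrywise by $A(G)$, and the Perron root is monotone for entrywise-ordered nonnegative matrices; since $A(H)$ is symmetric, its Perron root equals $\lambda_1(H)$. Second, I would recall that $C_g$ has eigenvalues $2\cos(2\pi j/g)$ for $j=0,\dots,g-1$, so that $\lambda_1(C_g)=2$ for every $g\ge 3$.

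With these two facts the inequality is immediate: a connected unicyclic graph $G$ contains exactly one cycle, say $C_g$, as a subgraph, so $\lambda_1(G)\ge\lambda_1(C_g)=2$. If $G=C_n$ then $\lambda_1(G)=2$, which settles one direction of the equality statement.

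For the converse I would show that if $G$ is connected, unicyclic, and $G\neq C_n$, then $\lambda_1(G)>2$. Since $G$ is connected and has strictly more vertices than its unique cycle $C_g$, some tree hangs off $C_g$; choosing an edge $uw$ with $u\in V(C_g)$ and $w\notin V(C_g)$ exhibits inside $G$ a subgraph isomorphic to $C_g^{+}$, the cycle $C_g$ together with one pendant vertex $w$ attached at $u$. Hence $\lambda_1(G)\ge\lambda_1(C_g^{+})$, and it suffices to prove $\lambda_1(C_g^{+})>2$. Now $C_g^{+}$ is connected, so $A(C_g^{+})$ is irreducible, and $C_g=C_g^{+}-w$ corresponds to a proper principal submatrix of $A(C_g^{+})$; by the Perron--Frobenius theorem a proper principal submatrix of an irreducible nonnegative matrix has strictly smaller spectral radius, whence $2=\lambda_1(C_g)<\lambda_1(C_g^{+})\le\lambda_1(G)$.

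The only delicate point is precisely this strict-inequality step, since the non-strict subgraph monotonicity alone does not exclude $\lambda_1(G)=2$ for $G\neq C_n$. As an alternative to the Perron--Frobenius argument one may compute directly: writing $\phi(H,x)$ for the characteristic polynomial of a graph $H$ and using the pendant-vertex reduction $\phi(C_g^{+},x)=x\,\phi(C_g,x)-\phi(P_{g-1},x)$ together with $\phi(C_g,2)=0$ and $\phi(P_{g-1},2)=g$, one obtains $\phi(C_g^{+},2)=-g<0$, while $\phi(C_g^{+},x)\to+\infty$ as $x\to+\infty$; hence the largest root of $\phi(C_g^{+},x)$, which is $\lambda_1(C_g^{+})$, exceeds $2$. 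Either route completes the proof; both are routine once the reduction formulas or the Perron--Frobenius statement are invoked.
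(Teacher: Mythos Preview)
Your argument is correct. The subgraph monotonicity of the spectral radius, together with $\lambda_1(C_g)=2$, gives the inequality at once, and your strict-inequality step via Perron--Frobenius (or equivalently via the characteristic-polynomial computation $\phi(C_g^{+},2)=-g<0$, which checks out since $\phi(P_{g-1},2)=g$) cleanly handles the equality characterisation.

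There is, however, nothing to compare against: in the paper this lemma is merely quoted from Hong \cite{h86} in the preliminary list of known results and is not proved there. The paper uses it only as a black box to feed the bound $\lambda_1\ge 2$ into the increasing function $\phi(x)=e^x+(n-1)-x$ (and its bipartite analogue $\Phi$). So your write-up supplies a self-contained proof where the paper offers none; if anything, it is more than what the paper requires.
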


\begin{lemma}[Collatz, Sinogowitz \cite{cs57}]\label{lema10}
If $G$ is a connected graph with $n$ vertices, then
$$ \lambda_1\geq2\cos{\left(\frac{\pi}{(n+1)} \right)} $$
with equality if and only if $G$ is isomorphic to $P_n$.
\end{lemma}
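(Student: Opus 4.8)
The plan is to reduce the statement for a general connected graph to the case of trees, and then to identify $P_n$ as the unique tree on $n$ vertices minimizing $\lambda_1$. For the reduction, pick a spanning tree $T$ of $G$ (possible since $G$ is connected) and let $y>0$ be a Perron eigenvector of $A(T)$. Writing $A(G)=A(T)+N$, where $N$ is the nonnegative symmetric matrix supported on $E(G)\setminus E(T)$, the Rayleigh quotient bound gives
\[
\lambda_1(G)\ \ge\ \frac{y^{\top}A(G)\,y}{y^{\top}y}\ =\ \lambda_1(T)+\frac{y^{\top}Ny}{y^{\top}y}\ \ge\ \lambda_1(T),
\]
and since every entry of $y$ is strictly positive, the last inequality is strict unless $E(G)=E(T)$, i.e.\ unless $G$ is already a tree. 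Hence it suffices to prove $\lambda_1(T)\ge 2\cos\!\big(\pi/(n+1)\big)$ for every tree $T$ on $n$ vertices, with equality only for $P_n$.

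\textbf{The tree case.} Here I would use a path-merging (grafting) transformation. Rooting $T$ at a leaf and taking a branch vertex $v$ (with $\deg v\ge 3$) of maximum depth, one checks that every subtree hanging below a child of $v$ is a path, so $v$ carries at least two pendant paths, of lengths $p\ge q\ge 1$ say. Let $T'$ be obtained by detaching the shorter pendant path from $v$ and appending its vertices to the far end of the longer one. The core estimate is that this operation strictly decreases the spectral radius, $\lambda_1(T')<\lambda_1(T)$. Since the transformation strictly lowers the nonnegative integer $\sum_{\deg v\ge 3}(\deg v-2)$, iterating it terminates, and the terminal tree — having no vertex of degree $\ge 3$ — is exactly $P_n$. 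Therefore $\lambda_1(P_n)<\lambda_1(T)$ whenever $T\neq P_n$, and combined with the reduction this proves the inequality and its equality case. Finally $\lambda_1(P_n)=2\cos\!\big(\pi/(n+1)\big)$ because the adjacency eigenvectors of $P_n$ are $\big(\sin\tfrac{jk\pi}{n+1}\big)_{j=1}^{n}$ with eigenvalues $2\cos\tfrac{k\pi}{n+1}$ for $k=1,\dots,n$ (verified directly from $\sin((j{-}1)\theta)+\sin((j{+}1)\theta)=2\cos\theta\sin(j\theta)$ and the boundary values $\sin 0=\sin(k\pi)=0$).

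\textbf{Main obstacle.} The only real work is the grafting inequality $\lambda_1(T')<\lambda_1(T)$. I would prove it using Schwenk's recursion for the characteristic polynomial along a pendant path: attaching a pendant path of length $\ell$ at a fixed vertex multiplies in a Chebyshev-polynomial (of the second kind) factor, so $\phi(T,x)$ and $\phi(T',x)$ differ only in how the common length budget $p+q$ is split between two such factors, and one compares them at $x=\lambda_1(T')$ using that $\lambda_1$ is a simple root beyond which $\phi$ is positive (Perron--Frobenius). An alternative, more hands-on route is to start from the Perron eigenvector of $T$, observe that along any pendant path it must be a scalar multiple of $\sin$ evaluated at equally spaced points, and show that moving a single vertex from the shorter pendant path to the longer one strictly decreases the Rayleigh quotient (a Li--Feng-type argument), then iterate. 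Either way this lemma, standard in spectral graph theory, is the technical heart; the two surrounding reductions are routine.
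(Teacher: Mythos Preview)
The paper does not supply a proof of this lemma; it is quoted as a known result of Collatz and Sinogowitz and used as a black box alongside the other cited spectral-radius bounds. So there is no in-paper argument to compare against.

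Your outline is a correct and standard route to the result. The spanning-tree reduction via a strictly positive Perron vector of $T$ is exactly the right way to pass from connected graphs to trees and simultaneously secure strictness when $G$ is not a tree. The Li--Feng--type grafting move (merging two pendant paths at a deepest branch vertex) is the classical tool that drives any tree monotonically down to $P_n$, and your termination measure $\sum_{\deg v\ge 3}(\deg v-2)$ works. The explicit eigenpair for $P_n$ is correct. The only substantive step, as you rightly flag, is the strict inequality $\lambda_1(T')<\lambda_1(T)$ under path-merging; both approaches you sketch (Schwenk's recursion expressed through Chebyshev polynomials of the second kind, or a direct Rayleigh-quotient comparison starting from the Perron eigenvector of $T$) go through. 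One small caveat for the Rayleigh-quotient version: strictness requires that $v$ have at least one neighbour beyond the two pendant paths being merged --- in your setup this is guaranteed by the parent edge toward the rooted leaf, so the hypothesis is met.
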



\section{Main Results}

In this section, we present our main results for the Estrada Index, which are lower bounds that are a function of known structural elements of the graph.

\subsection{\textbf{Lower bound for the Estrada index}}
\vspace{0.3cm}

Consider the following function 
\begin{equation}\label{f}
f(x)= (x-1)-\ln(x), \, \, x>0.
\end{equation}
that is is decreasing in $(0,1]$ and increasing in $[1,+\infty)$
		Thereby, $f(x) \geq f(1) = 0$, implying that 
		\begin{equation}\label{eq7}
		x \geq 1 + \ln x, \,\,\,x > 0,
		\end{equation}
		the equality holds if and only if $x = 1$. 
		Let $G$ be a graph of order $n$, using (\ref{eq1}) and (\ref{eq7}), we get:
		\begin{equation}\label{Energia0}
		\begin{array}{lllll}
		EE(G) & \geq & e^{\lambda_1} + (n-1)+ \displaystyle \sum_{k=2}^n \ln e^{\lambda_k}  \\
		& = & e^{\lambda_1} + (n -1) +\displaystyle \sum_{k=2}^n \lambda_k \\
		& = &  e^{\lambda_1} + (n -1) +M_1 - \lambda_1\\
		
		& = &  e^{\lambda_1} + (n -1)  - \lambda_1.
		\end{array}
		\end{equation}
		
\begin{remark}\label{obs}
The equality in (\ref{Energia0}) holds if and only if $\lambda_2=\ldots=\lambda_n=0$. Then, by Lemma \ref{lema6}, $G$ is isomorphic to $\overline{K_n}$.
\end{remark}
		
Define the function
		\begin{equation}\label{fi}
		\phi(x)= e^x + (n-1) - x,\qquad x\geq0.
		\end{equation}
		
\noindent Note that, this is an increasing function on $D_{\phi}=[0,+\infty)$.\\

Next, together with that the above Remark \ref{obs}, we will establish the following results based on the Lemmas mentioned in the previous section.

\begin{theorem}
Let $G$ be a connected graph with $m$ edges, then
\begin{equation}\label{teo1}
 EE(G)> e^{\left(\frac{m}{R}\right)}+(n-1)-\left(\frac{m}{R}\right),  \end{equation}
with $R=R(G)$ the Randi\'c index of $G$.
\end{theorem}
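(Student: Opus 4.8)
The plan is to combine the general inequality~(\ref{Energia0}) with the monotonicity of the function $\phi$ defined in~(\ref{fi}) and the spectral radius estimate of Lemma~\ref{lema3}. The inequality~(\ref{Energia0}) gives $EE(G) \geq e^{\lambda_1} + (n-1) - \lambda_1 = \phi(\lambda_1)$, valid for any graph of order $n$. Since $G$ is connected, $\lambda_1 \geq m/R$ by Lemma~\ref{lema3}, and since $\phi$ is increasing on $[0,+\infty)$ while $m/R \geq 0$, we conclude $EE(G) \geq \phi(\lambda_1) \geq \phi(m/R) = e^{(m/R)} + (n-1) - (m/R)$.

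The remaining point is to justify that the inequality is \emph{strict}, as claimed in~(\ref{teo1}). There are two places where equality could a priori occur: in~(\ref{Energia0}) and in the step $\phi(\lambda_1) \geq \phi(m/R)$. By Remark~\ref{obs}, equality in~(\ref{Energia0}) forces $G \cong \overline{K_n}$; but $\overline{K_n}$ is disconnected for $n \geq 2$ (and has no edges, so $R$ is not even defined / the hypothesis of Lemma~\ref{lema3} fails), so under the standing hypothesis that $G$ is connected with $m$ edges (hence $m \geq 1$, $n \geq 2$), equality in~(\ref{Energia0}) is impossible. This already yields the strict inequality $EE(G) > \phi(\lambda_1) \geq \phi(m/R)$, so one does not even need to analyze the equality case of Lemma~\ref{lema3}.

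I expect no real obstacle here: the argument is a direct chaining of a known first-moment bound, monotonicity of $\phi$, and one of the listed spectral-radius lemmas. The only subtlety worth stating carefully is the strictness, and the cleanest route is the one above — invoke Remark~\ref{obs} to rule out equality in~(\ref{Energia0}) using connectedness, rather than trying to characterize when $\lambda_1 = m/R$. One should also remark in passing that $m/R \geq 0$ (indeed $R > 0$ for a graph with at least one edge, and $m \geq 1$), so that applying the monotonicity of $\phi$ on its domain $[0,+\infty)$ is legitimate.

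\begin{remark}
More generally, for any lower bound $\lambda_1 \geq \beta(G)$ with $\beta(G) \geq 0$ drawn from the lemmas of the previous section, the same two-line argument gives $EE(G) \geq \phi(\beta(G)) = e^{\beta(G)} + (n-1) - \beta(G)$, with strict inequality whenever $G$ is not $\overline{K_n}$; the subsequent theorems are instances of this scheme with $\beta(G)$ taken to be $2m/n$, $\sqrt{\Delta}$, $R_{1/2}/m$, $\sqrt[D]{n-1}$, $2(m-\delta)/(n-1)$, and so on.
\end{remark}
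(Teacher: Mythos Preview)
Your proof is correct and follows the same line as the paper's: chain the inequality~(\ref{Energia0}) with the monotonicity of $\phi$ from~(\ref{fi}) and the bound $\lambda_1 \geq m/R$ from Lemma~\ref{lema3}, then invoke Remark~\ref{obs} and connectedness to rule out equality. Your write-up is in fact more careful than the paper's, since you explicitly verify $m/R \geq 0$ and point out that strictness comes from~(\ref{Energia0}) rather than from Lemma~\ref{lema3}.
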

\begin{proof}
Using Lemma \ref{lema3} and (\ref{fi}), we have 
$$\phi(\lambda_1)\geq\phi\left(\frac{m}{R}\right).$$
As $G$ is connected, the equality in (\ref{teo1}) is not verified, see Remark \ref{obs}.
\end{proof}

\begin{theorem}
Let G be a non-empty graph with maximum vertex degrees $\Delta$. Then 
\begin{equation}\label{teo2}
 EE(G)> e^{\sqrt{\Delta}}+(n-1)-\sqrt{\Delta}.   
\end{equation}
\end{theorem}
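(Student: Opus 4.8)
The plan is to mimic exactly the proof of the preceding theorem, substituting Lemma~\ref{lema4} for Lemma~\ref{lema3}. The inequality~(\ref{Energia0}) gives $EE(G)\ge \phi(\lambda_1)$ with $\phi$ as in~(\ref{fi}), and $\phi$ is increasing on $[0,+\infty)$. So whenever we have a lower bound $\lambda_1\ge c\ge 0$ coming from one of the lemmas, we immediately get $EE(G)\ge \phi(\lambda_1)\ge \phi(c)$.

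First I would invoke Lemma~\ref{lema4}, which for a non-empty graph gives $\lambda_1\ge\sqrt{\Delta}$. Since $G$ is non-empty, $\Delta\ge 1$, so $\sqrt{\Delta}\ge 1>0$ and in particular $\sqrt{\Delta}\in D_\phi=[0,+\infty)$; thus the monotonicity of $\phi$ applies. Then $EE(G)\ge\phi(\lambda_1)\ge\phi(\sqrt{\Delta})=e^{\sqrt{\Delta}}+(n-1)-\sqrt{\Delta}$, which is~(\ref{teo2}) but with $\ge$ in place of $>$.

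To upgrade to a strict inequality I would appeal to Remark~\ref{obs}: equality in~(\ref{Energia0}) forces $\lambda_2=\cdots=\lambda_n=0$, so by Lemma~\ref{lema6} $G$ would have at most two distinct eigenvalues and be a disjoint union of complete graphs $K_{\mu_1+1}$ together with possibly some isolated vertices; but then to avoid a second positive eigenvalue beyond $\lambda_1$ the graph must be $\overline{K_n}$ (empty) — contradicting that $G$ is non-empty. Hence equality in~(\ref{Energia0}) cannot hold, and even if equality held in Lemma~\ref{lema4} we would still get a strict inequality in~(\ref{teo2}) from the first step. The only real point to watch is making sure the hypothesis "non-empty" is genuinely used twice: once to guarantee $\Delta\ge 1$ so that the argument of $\phi$ lies in its domain and $\sqrt{\Delta}$ is well-defined as a lower bound, and once (via Remark~\ref{obs}) to rule out equality. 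There is no serious obstacle here; the proof is a one-line consequence of Lemma~\ref{lema4}, the monotonicity of $\phi$, and Remark~\ref{obs}.

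\begin{proof}
Since $G$ is non-empty, $\Delta\ge 1$ and hence $\sqrt{\Delta}\ge 1 > 0$, so $\sqrt{\Delta}\in D_\phi$. By Lemma~\ref{lema4}, $\lambda_1\ge\sqrt{\Delta}$, and since $\phi$ is increasing on $[0,+\infty)$ we obtain from~(\ref{Energia0}) that
$$EE(G)\ge\phi(\lambda_1)\ge\phi\!\left(\sqrt{\Delta}\right)=e^{\sqrt{\Delta}}+(n-1)-\sqrt{\Delta}.$$
As $G$ is non-empty, equality in~(\ref{Energia0}) fails by Remark~\ref{obs}, so the inequality~(\ref{teo2}) is strict.
\end{proof}
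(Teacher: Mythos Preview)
Your proof is correct and follows exactly the same approach as the paper: apply Lemma~\ref{lema4} to bound $\lambda_1\ge\sqrt{\Delta}$, use the monotonicity of $\phi$ together with~(\ref{Energia0}), and invoke Remark~\ref{obs} to rule out equality since $G$ is non-empty. Your version is slightly more detailed (explicitly checking $\sqrt{\Delta}\in D_\phi$), but the argument is identical.
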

\begin{proof}
Using Lemma \ref{lema4} and (\ref{fi}), we have 
$$\phi(\lambda_1)\geq\phi(\sqrt{\Delta}).$$
As $G$ is non empty, the equality in (\ref{teo2}) is not verified. (see Remark \ref{obs})
\end{proof}

\begin{theorem}
If $G$ is a graph with $n$ vertices, $m$ edges and degree sequence $d_1, d_2,\ldots,d_n$, then 
\begin{equation}\label{teo3}
 EE(G)>e^{\frac{R_{1/2}}{m}}+(n-1)-\frac{R_{1/2}}{m},    
\end{equation}
\end{theorem}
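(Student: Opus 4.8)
The plan is to mimic exactly the pattern used in the two preceding theorems, invoking the monotonicity of the auxiliary function $\phi$ together with the appropriate lower bound on $\lambda_1$. Since the statement to be proved involves $R_{1/2}/m$, the relevant tool is Lemma \ref{lema5}, which asserts $\lambda_1 \geq R_{1/2}/m$ for any graph with $n$ vertices and $m$ edges. Recall from \eqref{Energia0} that $EE(G) \geq \phi(\lambda_1)$, where $\phi(x) = e^x + (n-1) - x$ is increasing on $[0,+\infty)$.

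First I would note that $R_{1/2}/m \geq 0$, so that $\phi$ may legitimately be evaluated there (this is immediate, since all degrees are nonnegative and $R_{1/2} = \sum_{ij\in\mathcal{E}(G)} \sqrt{d(i)d(j)} \geq 0$). Then, applying the monotonicity of $\phi$ to the inequality $\lambda_1 \geq R_{1/2}/m$ from Lemma \ref{lema5}, I obtain
$$EE(G) \geq \phi(\lambda_1) \geq \phi\!\left(\frac{R_{1/2}}{m}\right) = e^{R_{1/2}/m} + (n-1) - \frac{R_{1/2}}{m},$$
which is the desired bound \eqref{teo3}.

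The only delicate point is whether the inequality is strict, as the theorem statement writes ``$>$''. By Remark \ref{obs}, equality in \eqref{Energia0} forces $\lambda_2 = \cdots = \lambda_n = 0$, hence (by Lemma \ref{lema6}) $G \cong \overline{K_n}$; but then $m = 0$ and the quantity $R_{1/2}/m$ is undefined, so implicitly $m \geq 1$ and $G$ is non-empty, ruling out this case. One must also check that equality in Lemma \ref{lema5} does not by itself collapse the bound — but even if $\lambda_1 = R_{1/2}/m$, strictness already comes from the first inequality $EE(G) > \phi(\lambda_1)$ for any non-empty graph. I expect this strictness bookkeeping to be the main (and only mild) obstacle; the analytic content is a one-line application of Lemma \ref{lema5}. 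I would close the proof with a sentence paralleling the earlier theorems: ``As $G$ is non-empty, the equality in \eqref{teo3} is not verified; see Remark \ref{obs}.''
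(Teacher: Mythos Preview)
Your proposal is correct and follows exactly the paper's own argument: apply Lemma~\ref{lema5} to get $\lambda_1 \ge R_{1/2}/m$, use the monotonicity of $\phi$ from~\eqref{fi} to conclude $EE(G)\ge\phi(\lambda_1)\ge\phi(R_{1/2}/m)$, and invoke Remark~\ref{obs} for strictness since $G$ is non-empty. Your additional remarks on why $R_{1/2}/m\ge 0$ and why $m\ge 1$ is implicit are sound elaborations but not required beyond what the paper does.
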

\begin{proof}
Using Lemma \ref{lema5} and (\ref{fi}), we have 
$$\phi(\lambda_1)\geq\phi\left(\frac{R_{1/2}}{m}\right).$$
As $G$ is non empty, the equality in (\ref{teo3}) is not verified, see Remark \ref{obs}.
\end{proof}

\begin{theorem}
If $G$ is a connected graph with $n$ vertices and diameter $D$. Then,
\begin{equation}\label{teo4}
 EE(G) > e^{\sqrt[D]{n-1}}+(n-1)-\sqrt[D]{n-1}.
\end{equation}
\end{theorem}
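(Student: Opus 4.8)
The plan is to follow exactly the template that has been used for Theorems \ref{teo1}--\ref{teo3}, now invoking the spectral radius estimate coming from the diameter rather than from the Randi\'c index or the maximum degree. Concretely, the key ingredients are the chain of inequalities \eqref{Energia0} culminating in $EE(G)\geq \phi(\lambda_1)$ with $\phi$ as in \eqref{fi}, the monotonicity of $\phi$ on $[0,+\infty)$, and Lemma \ref{lema7}, which gives $\lambda_1\geq \sqrt[D]{n-1}$ for a connected graph $G$ on $n$ vertices with diameter $D$.

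The steps, in order, are as follows. First I would note that for a connected graph $\sqrt[D]{n-1}\geq 0$ lies in the domain $D_\phi=[0,+\infty)$ of $\phi$, so that $\phi$ may be applied to it. Second, I would apply the monotonicity of $\phi$ together with Lemma \ref{lema7} to obtain
$$EE(G)\;\geq\;\phi(\lambda_1)\;\geq\;\phi\!\left(\sqrt[D]{n-1}\right)\;=\;e^{\sqrt[D]{n-1}}+(n-1)-\sqrt[D]{n-1}.$$
Third, to upgrade the weak inequality to a strict one, I would appeal to Remark \ref{obs}: equality in \eqref{Energia0} forces $\lambda_2=\cdots=\lambda_n=0$, whence by Lemma \ref{lema6} $G\cong\overline{K_n}$, which is disconnected for $n\geq 2$ and therefore excluded by hypothesis; moreover $n=1$ is a degenerate case with diameter $0$ that can be set aside. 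Hence at least one of the two inequalities above is strict, yielding \eqref{teo4}.

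The argument is essentially routine given the machinery already assembled; the only point requiring a little care is the justification of the \emph{strict} inequality. One must check that the hypothesis ``$G$ connected'' genuinely rules out the equality case $G\cong\overline{K_n}$ (true for $n\geq 2$) and decide how to treat $n=1$ — either exclude it as trivial or observe that the claimed bound is vacuous or easily verified there. A secondary (purely cosmetic) issue is making sure the $D$-th root is interpreted as a real nonnegative number so that it sits in $D_\phi$; since $n\geq 2$ for a connected graph with a well-defined positive diameter, $n-1\geq 1$ and $\sqrt[D]{n-1}\geq 1>0$, so this is immediate. I do not anticipate any genuine obstacle beyond bookkeeping of these edge cases.

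\begin{proof}
Using Lemma \ref{lema7} and \eqref{fi}, we have
$$\phi(\lambda_1)\geq\phi\!\left(\sqrt[D]{n-1}\right).$$
As $G$ is connected, the equality in \eqref{teo4} is not verified, see Remark \ref{obs}.
\end{proof}
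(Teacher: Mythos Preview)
Your proposal is correct and follows exactly the same approach as the paper: apply Lemma \ref{lema7} together with the monotonicity of $\phi$ from \eqref{fi} to get $\phi(\lambda_1)\geq\phi(\sqrt[D]{n-1})$, and then invoke Remark \ref{obs} to rule out equality since a connected graph cannot be $\overline{K_n}$. Your formal proof is essentially verbatim the paper's own proof.
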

\begin{proof}
Using Lemma \ref{lema7} and (\ref{fi}), we have
$$\phi(\lambda_1)\geq\phi(\sqrt[D]{n-1}).$$
As $G$ is connected, the equality in (\ref{teo4}) is not verified. (see Remark \ref{obs})
\end{proof}

\begin{theorem}
If $G$ is a graph with $n$ vertices, $m$ edges and minimum degree $\delta$. Then
\begin{equation}
 EE(G)\geq e^{\frac{2(m-\delta)}{(n-1)}}+(n-1)-\frac{2(m-\delta)}{(n-1)}.
\end{equation}
Moreover, the equality is verified hold if and only if $G$ is isomorph to $\overline{K_n}$. 
\end{theorem}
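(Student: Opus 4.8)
The plan is to run the same machine as in the previous four theorems: feed the eigenvalue bound of Lemma~\ref{lema8} into the increasing function $\phi$ of (\ref{fi}), combine with (\ref{Energia0}), and then — since here an equality statement is claimed — carry out the equality analysis explicitly.

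First I would recall that (\ref{Energia0}) already gives $EE(G)\ge \phi(\lambda_1)$ for \emph{every} graph $G$ of order $n$ (no connectivity is used there). Since a vertex realizing the minimum degree is incident to $\delta$ edges, we have $m\ge\delta$, so $\tfrac{2(m-\delta)}{n-1}\ge 0$ lies in the domain $D_\phi=[0,+\infty)$, and $\lambda_1\ge 0$ as well. By Lemma~\ref{lema8}, $\lambda_1\ge \tfrac{2(m-\delta)}{n-1}$, and monotonicity of $\phi$ on $D_\phi$ yields
$$EE(G)\ \ge\ \phi(\lambda_1)\ \ge\ \phi\!\left(\frac{2(m-\delta)}{n-1}\right)\ =\ e^{\frac{2(m-\delta)}{n-1}}+(n-1)-\frac{2(m-\delta)}{n-1},$$
which is the claimed bound.

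For the equality case, the ``only if'' direction is immediate from Remark~\ref{obs}: if equality holds overall, then in particular equality holds in (\ref{Energia0}), forcing $\lambda_2=\dots=\lambda_n=0$ and hence $G\cong\overline{K_n}$ by Lemma~\ref{lema6}. For the ``if'' direction I would just verify $\overline{K_n}$ by hand: there $m=0$ and $\delta=0$, so $\tfrac{2(m-\delta)}{n-1}=0$, the right-hand side is $e^{0}+(n-1)-0=n$, and $EE(\overline{K_n})=n\,e^{0}=n$, so equality holds. (Consistently, $\phi$ being strictly increasing, the middle inequality above is tight iff $\lambda_1=\tfrac{2(m-\delta)}{n-1}$, which for $\overline{K_n}$ is $0=0$.)

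There is essentially no hard step; the only thing deserving care is making sure the extremal graph is \emph{unique}. One should note that the family $\Gamma$ proposed in Remark~\ref{obs8} as an equality case for Lemma~\ref{lema8} does not create further equality here: for $G$ a nonempty $r$-regular graph with $r\ge 1$, the graph $G\cup K_1$ has $\lambda_1=r>0$ but $\lambda_2,\dots,\lambda_n$ are not all $0$, so (\ref{Energia0}) is strict for it. Hence $\overline{K_n}$ is indeed the only graph attaining equality.
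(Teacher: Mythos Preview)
Your proof is correct and follows essentially the same approach as the paper: apply Lemma~\ref{lema8} to get $\lambda_1\ge \tfrac{2(m-\delta)}{n-1}$, use the monotonicity of $\phi$ from (\ref{fi}) together with (\ref{Energia0}), and invoke Remark~\ref{obs} for the equality characterization. You supply more detail than the paper does --- in particular the explicit verification of the ``if'' direction for $\overline{K_n}$ and the observation that the family $\Gamma$ of Remark~\ref{obs8} cannot yield further equality cases because (\ref{Energia0}) is strict there --- but the underlying argument is the same.
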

\begin{proof}
Ussing Lemma \ref{lema8} and (\ref{fi}), we have $$\phi(\lambda_1)\geq \phi\left(\frac{2(m-\delta)}{(n-1)}\right).$$
 The equality is verified, from Remark \ref{obs}, if and only if $G$ is isomorph to $\overline{K_n}$.
\end{proof}

\begin{theorem}
If $G$ is a connected unicyclic graph, then
\begin{equation}\label{teo6}
 EE(G) > e^{2}+(n-3).
\end{equation}
\end{theorem}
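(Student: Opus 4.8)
The plan is to combine the general lower bound (\ref{Energia0}) with the monotonicity of $\phi$ and Hong's bound from Lemma \ref{lema9}. First I would observe that (\ref{Energia0}) says precisely $EE(G) \geq \phi(\lambda_1)$, with $\phi$ the function introduced in (\ref{fi}), which is increasing on $[0,+\infty)$. Since $G$ is connected unicyclic, Lemma \ref{lema9} gives $\lambda_1 \geq 2$, so applying $\phi$ to both sides yields
$$EE(G) \geq \phi(\lambda_1) \geq \phi(2) = e^{2} + (n-1) - 2 = e^{2} + (n-3).$$

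Next I would upgrade this to a strict inequality. A connected unicyclic graph on $n$ vertices has $n \geq 3$ and at least one edge, so $G \not\cong \overline{K_n}$; by Remark \ref{obs}, equality in (\ref{Energia0}) holds only when $G \cong \overline{K_n}$, hence $EE(G) > \phi(\lambda_1) \geq \phi(2) = e^{2} + (n-3)$, which is exactly (\ref{teo6}).

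The argument is essentially immediate once (\ref{Energia0}) and Lemma \ref{lema9} are available; the only point needing care is where the strictness comes from. For the cycle $C_n$ one has $\lambda_1 = 2$ exactly, so strictness cannot be extracted from $\phi(\lambda_1) > \phi(2)$ in that case — it must come from the strictness of (\ref{Energia0}), which is guaranteed because $C_n$ has eigenvalues other than $\lambda_1$ that are nonzero (equivalently $C_n \neq \overline{K_n}$, via Lemma \ref{lema6}). For non-cycle unicyclic graphs Lemma \ref{lema9} already gives $\lambda_1 > 2$, so strictness is present at the $\phi$ step as well. Either way the bound (\ref{teo6}) is strict.
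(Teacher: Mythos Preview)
Your proof is correct and follows essentially the same approach as the paper: combine the general bound (\ref{Energia0}), the monotonicity of $\phi$ from (\ref{fi}), and Hong's inequality from Lemma \ref{lema9} to obtain $EE(G)\geq\phi(\lambda_1)\geq\phi(2)=e^{2}+(n-3)$, and then invoke Remark \ref{obs} (a connected unicyclic graph is not $\overline{K_n}$) to upgrade to a strict inequality. Your additional discussion pinpointing that for $C_n$ the strictness must come from (\ref{Energia0}) rather than from $\phi(\lambda_1)>\phi(2)$ is a nice clarification, but the underlying argument is the same as the paper's.
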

\begin{proof}
Using Lemma \ref{lema9} and (\ref{fi}), we have 
$$\phi(\lambda_1)\geq\phi(2).$$
As $G$ is connected, the equality in (\ref{teo6}) is not verified. (see Remark \ref{obs})
\end{proof}

\begin{theorem}
If $G$ is a connected graph with $n$ vertices, then
\begin{equation}\label{teo7}
 EE(G) > e^{2\cos{\left(\frac{\pi}{n+1}\right)}}+(n-1)
 -2\cos{\left(\frac{\pi}{n+1}\right)}.
\end{equation}
\end{theorem}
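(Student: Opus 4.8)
The plan is to mirror exactly the argument used for Theorems \ref{teo1}--\ref{teo6}: feed the spectral-radius lower bound of Lemma \ref{lema10} into the monotone function $\phi$ of \eqref{fi} and then invoke Remark \ref{obs} to decide strictness. First I would record that for $n\ge 2$ we have $\tfrac{\pi}{n+1}\in(0,\tfrac{\pi}{2}]$, hence $2\cos\!\left(\tfrac{\pi}{n+1}\right)\in[0,2)\subset D_\phi$, so that $\phi$ may legitimately be evaluated at this point and, being increasing on $[0,+\infty)$, is order-preserving there.

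Next, applying Lemma \ref{lema10}, $\lambda_1\geq 2\cos\!\left(\tfrac{\pi}{n+1}\right)$, and therefore
$$\phi(\lambda_1)\;\geq\;\phi\!\left(2\cos\!\left(\tfrac{\pi}{n+1}\right)\right)\;=\;e^{2\cos\left(\frac{\pi}{n+1}\right)}+(n-1)-2\cos\!\left(\tfrac{\pi}{n+1}\right).$$
Combining this with inequality \eqref{Energia0}, namely $EE(G)\geq \phi(\lambda_1)$, gives the stated bound with ``$\geq$'' in place of ``$>$''.

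To upgrade to strict inequality I would use Remark \ref{obs}: the inequality $EE(G)\geq\phi(\lambda_1)$ is an equality only when $\lambda_2=\dots=\lambda_n=0$, i.e.\ (by Lemma \ref{lema6}) only when $G\cong\overline{K_n}$. Since $G$ is connected with $n\ge 2$ vertices it has at least one edge, so $G\not\cong\overline{K_n}$ and the inequality \eqref{Energia0} is strict; the chain above then yields \eqref{teo7}.

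I do not anticipate a genuine obstacle here — the argument is a direct instantiation of the template already established. The only point requiring a moment's care is the domain check $2\cos\!\left(\tfrac{\pi}{n+1}\right)\ge 0$ (so that the monotonicity of $\phi$ on $[0,+\infty)$ applies) together with the implicit standing hypothesis $n\ge 2$, without which $\overline{K_1}=K_1$ is itself connected and the strictness claim would fail.
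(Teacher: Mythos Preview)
Your proposal is correct and follows essentially the same route as the paper: apply Lemma~\ref{lema10} to get $\lambda_1\ge 2\cos\!\left(\tfrac{\pi}{n+1}\right)$, feed this into the increasing function $\phi$ of \eqref{fi}, and invoke Remark~\ref{obs} to rule out equality since a connected graph on $n\ge 2$ vertices cannot be $\overline{K_n}$. The extra domain check and the remark about $n\ge 2$ are minor additions, not a different approach.
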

\begin{proof}
Using Lemma \ref{lema10} and (\ref{fi}), we have 
$$\phi(\lambda_1)\geq\phi\left(2\cos\left(\frac{\pi}{n+1}\right)\right).$$
As $G$ is connected, the equality in (\ref{teo7}) is not verified. (see Remark \ref{obs})
\end{proof}
\vspace{1cm}

\subsection{\textbf{The Estrada index of a bipartite graph}}\label{bip}
\vspace{0.5cm}
In the following result, we obtain a sharp lower bound of the Estrada index for a bipartite graph. Considering (\ref{eq1}) and (\ref{eq7}), we obtain
	\begin{equation}\label{EE2}
	\begin{array}{lllll}
	EE(G) & = & e^{\lambda_1}+e^{-\lambda_1} + \displaystyle \sum_{k=2}^{n-1} e^{\lambda_k}  \\
	& \geq & 2\cosh{\lambda_1} + (n-2) + \displaystyle \sum_{k=2}^{n-1} \lambda_k \\
	& = &  2\cosh{\lambda_1} + (n-2) + M_1 + \lambda_1 - \lambda_1\\
	& = &   2\cosh{\lambda_1} +(n-2).
	\end{array}
	\end{equation}	
	Since, 
	\begin{equation}\label{fi_bi}
	    \Phi(x)= 2\cosh{x} + (n-2),
	\end{equation}
	is an increasing function on $D_{\Phi}=[0,+\infty)$.

\begin{theorem}\label{T20}
Let $G$ be a connected graph with $m$ edges, then
\begin{equation}\label{teo18}
EE(G)\geq 2\cosh{\left(\frac{m}{R}\right)} +(n-2),
\end{equation}
with $R=R(G)$ the Rand\'ic index of $G$. Moreover, the equality is verified hold if and only if $G$ is isomorph to ${K_{p,q}}$, where $p+q=n$.
\end{theorem}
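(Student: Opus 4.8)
The plan is to mimic, in the bipartite setting, the argument that produced (\ref{teo1}), using the chain (\ref{EE2}) in place of (\ref{Energia0}). Since $G$ is connected and bipartite (as throughout Section \ref{bip}), its adjacency spectrum is symmetric about $0$, so $\lambda_n=-\lambda_1$ and (\ref{EE2}) gives $EE(G)\ge 2\cosh\lambda_1+(n-2)=\Phi(\lambda_1)$ with $\Phi$ the increasing function in (\ref{fi_bi}). Connectedness makes $G$ non-empty, hence $\lambda_1>0$, and Lemma \ref{lema3} yields $\lambda_1\ge m/R>0$. Monotonicity of $\Phi$ on $[0,+\infty)$ then gives $EE(G)\ge\Phi(\lambda_1)\ge\Phi(m/R)=2\cosh(m/R)+(n-2)$, which is exactly (\ref{teo18}).

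For the equality characterization I would track the two estimates used. Equality in (\ref{EE2}) comes from applying $e^x\ge 1+x$ to $\lambda_2,\dots,\lambda_{n-1}$, so it holds precisely when $\lambda_2=\cdots=\lambda_{n-1}=0$; together with $\lambda_n=-\lambda_1$ this says the spectrum of $G$ is $\lambda_1$, then $0$ with multiplicity $n-2$, then $-\lambda_1$, with $\lambda_1>0$. A connected graph with exactly one positive adjacency eigenvalue is complete multipartite, and here there is also exactly one negative eigenvalue, which forces exactly two parts, so $G\cong K_{p,q}$ with $p+q=n$. Conversely, for $K_{p,q}$ every edge joins a vertex of degree $q$ to one of degree $p$, so $R=pq\,(pq)^{-1/2}=\sqrt{pq}$ and $m/R=pq/\sqrt{pq}=\sqrt{pq}=\lambda_1(K_{p,q})$; since $K_{p,q}$ also has the spectrum $\sqrt{pq},\,0^{(n-2)},\,-\sqrt{pq}$, equality holds in (\ref{EE2}) and in Lemma \ref{lema3} simultaneously, hence in (\ref{teo18}). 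Thus the two conditions collapse to $G\cong K_{p,q}$.

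The only genuinely non-routine point is the spectral step: identifying the connected graphs whose adjacency spectrum is $\lambda_1$, $0$ (multiplicity $n-2$), $-\lambda_1$. Note that Lemma \ref{lema6} does not apply directly, since such a graph has three distinct eigenvalues. A clean route is to observe that $A(G)$ then has rank $2$, and a connected graph whose adjacency matrix has rank $2$ must be complete bipartite (equivalently, invoke the classical fact that a connected graph with a single positive eigenvalue is complete multipartite and combine it with the bipartiteness of $G$); I would either cite this or insert the short rank-$2$ argument. Everything else is the monotonicity bookkeeping already carried out for the earlier bounds.
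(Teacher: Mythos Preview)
Your derivation of the inequality is exactly the paper's: apply (\ref{EE2}) to get $EE(G)\ge\Phi(\lambda_1)$, then feed Lemma~\ref{lema3} into the monotonicity of $\Phi$ from (\ref{fi_bi}). (The paper actually cites Lemma~\ref{lema4} at this point, which is a slip; you correctly invoke Lemma~\ref{lema3}.)

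The only divergence is in the equality analysis. Both you and the paper first reduce to the spectral condition $\lambda_2=\cdots=\lambda_{n-1}=0$, $\lambda_n=-\lambda_1$. From there the paper appeals to the Perron--Frobenius theory of nonnegative irreducible matrices: the condition $|\lambda_n|=\lambda_1$ gives imprimitivity index $h=2$, and the Frobenius block form (cited from Minc) together with the remaining eigenvalues being zero forces the off-diagonal block to be all-ones, whence $G\cong K_{p,q}$. Your route instead uses the Smith-type classification (a connected graph with a single positive adjacency eigenvalue is complete multipartite) or, equivalently, a direct rank-$2$ argument, and then notes that bipartiteness pins down two parts. Your approach is lighter in prerequisites and entirely self-contained once the one-positive-eigenvalue fact is cited; the paper's approach ties the step to standard nonnegative-matrix machinery, which it then reuses verbatim in the subsequent theorems of Section~\ref{bip}. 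Both verifications of the converse (computing $R$ and $\lambda_1$ for $K_{p,q}$) are identical.
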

\begin{proof}

Using Lemma \ref{lema4} and (\ref{fi_bi}), we have 
$$\Phi(\lambda_1)\geq\Phi\left(\dfrac mR\right).$$
So we obtain
$$EE(G)\geq 2\cosh{\left(\frac{m}{R}\right)} +(n-2).$$

Suppose that equality in (\ref{teo18}) is maintained. Then in (\ref{EE2}) inequalities are changed by equalities. Thus, by (\ref{eq7}), we have  
\begin{equation}\label{imp}
   |\lambda_1|=|\lambda_n|=\dfrac{m}{R}  \quad \text{and} \quad \lambda_2=\ldots=\lambda_{n-1}=0.
\end{equation}
From (\ref{imp}), considering to the definition of imprimitivity $h$ in (\cite{M}, Section III) and by the general form of the Frobenius matrix of a nonnegative, irreducible and symmetric matrix, we have to $A(G)$ is permutationally equivalent to a block matrix of the form
$$\left( \begin{array}{cc}
    0_{p,p} & C  \\
     C^T & {0}_{q,q}
\end{array}
\right),
$$
where $C=(c_{i,j}),  c_{i,j}=1$ for $i=1,\ldots,q.$ and $j=1,\ldots,p.$ Then $G=K_{p,q}$, with $p+q=n$.\\
On the other hand, if $G=K_{p,q}$, we have to $m=pq$ and $R=\dfrac{pq}{\sqrt{pq}}$, then:
$$2\cosh\left(\dfrac mR\right)+(p+q-2)=2\cosh(\sqrt{pq})+(p+q-2)=EE(K_{p,q}).$$
\end{proof}

\begin{theorem}\label{teo21}
Let G be a non-empty graph with $n$ vertices and maximum vertex degrees $\Delta$. Then 
\begin{equation}\label{teo19}
 EE(G)\geq 2\cosh{\sqrt{\Delta}}+(n-2),   
\end{equation}
with equality if and only if $G \cong S_n$ or $G \cong S_{\Delta+1} \cup (n-\Delta-1)K_1$.
\end{theorem}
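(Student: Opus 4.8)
The plan is to combine Lemma \ref{lema4} with the monotonicity of $\Phi$ in (\ref{fi_bi}) exactly as in the previous theorems, and then work out the equality case carefully. First I would apply Lemma \ref{lema4}, namely $\lambda_1 \geq \sqrt{\Delta}$, together with the fact that $\Phi(x) = 2\cosh x + (n-2)$ is increasing on $[0,+\infty)$, to get $\Phi(\lambda_1) \geq \Phi(\sqrt{\Delta})$, and then chain this with the bound (\ref{EE2}), which gives $EE(G) \geq \Phi(\lambda_1) \geq 2\cosh\sqrt{\Delta} + (n-2)$. This establishes the inequality (\ref{teo19}).

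For the equality case, equality in (\ref{teo19}) forces equality in both steps. Equality in (\ref{EE2}) forces, via the equality condition of (\ref{eq7}), that $e^{\lambda_k} = 1$ for $k = 2,\ldots,n-1$, i.e. $\lambda_2 = \cdots = \lambda_{n-1} = 0$; so $G$ has at most three distinct eigenvalues, namely $\lambda_1 \geq 0 = \cdots = 0 \geq \lambda_n$ with $\lambda_n = -\lambda_1$ (using $M_1 = 0$). Equality in Lemma \ref{lema4} forces $\lambda_1 = \sqrt{\Delta}$. Next I would identify all graphs whose spectrum is $\{\sqrt{\Delta},\, 0^{(n-2)},\, -\sqrt{\Delta}\}$. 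A graph with eigenvalues $\mu, 0^{(n-2)}, -\mu$ has exactly $2m = M_2 = 2\mu^2$ edges, hence $m = \Delta$; a connected graph with $m = \Delta$ edges and a vertex of degree $\Delta$ must be the star $S_{\Delta+1}$, whose nonzero eigenvalues are $\pm\sqrt{\Delta}$. In the disconnected case, the only nontrivial component can again be a star $S_{\Delta+1}$ (any other component would contribute extra nonzero eigenvalues or extra edges), with the remaining $n - \Delta - 1$ vertices isolated; this gives $G \cong S_{\Delta+1} \cup (n-\Delta-1)K_1$. When $n = \Delta + 1$ these two descriptions coincide and $G \cong S_n$. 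Conversely, one checks directly that for $G = S_{\Delta+1} \cup (n-\Delta-1)K_1$ the Estrada index equals $e^{\sqrt{\Delta}} + e^{-\sqrt{\Delta}} + (n-2) = 2\cosh\sqrt{\Delta} + (n-2)$, so equality holds.

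The main obstacle is the spectral characterization step: showing that the only graphs with spectrum $\{\sqrt{\Delta},0^{(n-2)},-\sqrt{\Delta}\}$ are $S_n$ (up to added isolated vertices). The key leverage is the moment identity $M_2 = 2m = 2\Delta$, which pins down the edge count; combined with the presence of a degree-$\Delta$ vertex and the count of nonzero eigenvalues (rank of $A(G)$ equals $2$, so $G$ has exactly one non-bipartite-trivial component which must be a complete bipartite graph $K_{p,q}$ with $pq = \Delta$), one forces $\{p,q\} = \{1,\Delta\}$ because $\min(p,q) \leq$ the relevant degree bound; I would be careful here to rule out $K_{p,q}$ with $p,q \geq 2$, which would require $\Delta = \max(p,q)$ yet $pq = \Delta$, impossible for $p,q\geq 2$. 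I would also double-check the borderline statement in the theorem that distinguishes $S_n$ from $S_{\Delta+1}\cup(n-\Delta-1)K_1$, since these describe the same graph precisely when $n=\Delta+1$, so the disjunction in the statement is slightly redundant but harmless.
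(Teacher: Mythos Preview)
Your proof is correct and follows the same overall strategy as the paper: apply Lemma~\ref{lema4} together with the monotonicity of $\Phi$ from (\ref{fi_bi}) to obtain (\ref{teo19}), then analyse equality by forcing $\lambda_2=\cdots=\lambda_{n-1}=0$, $\lambda_1=-\lambda_n=\sqrt{\Delta}$, and $m=\Delta$ via $M_2=2m$.

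The only substantive difference is in the spectral characterization step. The paper invokes an external result of Caporossi, Cvetkovi\'c, Gutman and Hansen on graph energy: since $E(G)=|\lambda_1|+|\lambda_n|=2\sqrt{m}$, their theorem gives directly that $G$ is a complete bipartite graph plus isolated vertices, and then one argues $pq=\Delta=\max\{p,q\}$ forces $\min\{p,q\}=1$. You instead argue elementarily that $m=\Delta$ together with a vertex of degree $\Delta$ pins down the unique nontrivial component as $S_{\Delta+1}$ (equivalently, that $\operatorname{rank}A(G)=2$ forces a single $K_{p,q}$ component with $pq=\Delta$). Your route is more self-contained; the paper's route is shorter once the cited result is granted. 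Both reach the same conclusion, and your remark that the two cases in the statement coincide when $n=\Delta+1$ is a fair observation.
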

\begin{proof}
Using Lemma \ref{lema4} and (\ref{fi_bi}), we have 
$$\Phi(\lambda_1)\geq\Phi\left(\sqrt{\Delta}\right).$$
So we obtain 
$$EE(G)\geq 2\cosh{\sqrt{\Delta}} +(n-2).$$

Suppose that equality in (\ref{teo19}) is hold. Then in (\ref{EE2}) inequalities are changed by equalities. Thus, by (\ref{eq7}), we have  $\lambda_2=\ldots=\lambda_{n-1}=0$ and $\lambda_1=-\lambda_n=\sqrt{\Delta}$.  
Since
\begin{equation*}
    \sum\limits_{i=1}^{n} \lambda^{2}_{i}=2m,
\end{equation*}
then
\begin{equation*}
    \Delta = m
\end{equation*}
By Caporossi et al in \cite{C1}, Theorem 1, we have 
\begin{equation*}
    E(G)= 2\sqrt{m}
\end{equation*}
and $G$ is isomorphic to a complete bipartite graph with isolate vertices.

We claim two cases.
\begin{itemize}
    \item \textbf{Case 1}: Supposed a connected graph $G$ then $G \cong K_{p,q}$ where $p+q=n.$
Without loss of generality, we can suppose that $p=\max\{p,q\}=\Delta.$ Moreover, we have $pq=\Delta$, thereby $q=1.$ Therefore, $G \cong S_{n}.$ 
\item \textbf{Case 2}: If $G$ is not connect graph then $G$ is isomorphic to a complete  bipartite graph with isolate vertices, i.e, $G \cong K_{p,q} \cup (n-p-q)K_1$. Then, without loss of generality, we can suppose that $p=\max\{p,q\}=\Delta.$ Moreover, we have $pq=\Delta$. Asi  $G \cong K_{1,\Delta}\cup (n-\Delta-1)K_1.$
\end{itemize}
On the other hand, if $G$ is isomorphic to the graph in Theorem, the equality in (\ref{teo19}) is easily verified.

\end{proof}

\begin{theorem}
If $G$ is a graph with $n$ vertices, $m$ edges, then 
\begin{equation}\label{teo20}
 EE(G) \geq 2\cosh{\left(\frac{R_{1/2}}{m}\right)}+(n-2).    
\end{equation}
 The equality in (\ref{teo20}) holds if and only if  $G \cong K_{p,q} \cup (n-p-q)K_1$.
\end{theorem}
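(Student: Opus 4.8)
The plan is to reuse verbatim the scheme of Theorems~\ref{T20} and~\ref{teo21}, now feeding Lemma~\ref{lema5} into the inequality~(\ref{EE2}) and the increasing function~$\Phi$ of~(\ref{fi_bi}). First I would apply Lemma~\ref{lema5} to obtain $\lambda_1 \geq R_{1/2}/m$; since $\Phi(x)=2\cosh x+(n-2)$ is increasing on $[0,+\infty)$, this gives $\Phi(\lambda_1)\geq\Phi\!\left(R_{1/2}/m\right)$, and combining it with $EE(G)\geq\Phi(\lambda_1)$ (which is (\ref{EE2}), valid in the bipartite setting of this subsection) yields the stated bound $EE(G)\geq 2\cosh\!\left(R_{1/2}/m\right)+(n-2)$.

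For the equality statement, I would assume (\ref{teo20}) holds with equality. Then every step of $EE(G)\geq\Phi(\lambda_1)\geq\Phi\!\left(R_{1/2}/m\right)$ is an equality; in particular all inequalities in~(\ref{EE2}) are equalities, so by the equality case of~(\ref{eq7}) one gets $\lambda_2=\cdots=\lambda_{n-1}=0$, while the first line of~(\ref{EE2}) encodes $\lambda_n=-\lambda_1$. Using $M_2=2m$ from~(\ref{eq1}) this forces $2\lambda_1^2=2m$, so $\lambda_1=\sqrt{m}$ and the only nonzero eigenvalues are $\pm\sqrt{m}$. Hence $E(G)=2\sqrt{m}$, and by Caporossi et al.~\cite{C1}, Theorem~1, $G$ is a complete bipartite graph together with isolated vertices, i.e.\ $G\cong K_{p,q}\cup(n-p-q)K_1$.

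For the converse I would simply compute with $G\cong K_{p,q}\cup(n-p-q)K_1$: here $m=pq$, the nonzero adjacency eigenvalues are $\pm\sqrt{pq}$, and since each edge joins a vertex of degree $q$ to a vertex of degree $p$ one has $R_{1/2}=m\sqrt{pq}=(pq)^{3/2}$, so that $R_{1/2}/m=\sqrt{pq}=\lambda_1$ and $EE(G)=e^{\sqrt{pq}}+e^{-\sqrt{pq}}+(n-2)=2\cosh\!\left(R_{1/2}/m\right)+(n-2)$, which is exactly the right-hand side of~(\ref{teo20}).

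The hard part will be the forward direction of the equality analysis: one must check that equality throughout~(\ref{EE2}) and equality in Lemma~\ref{lema5} are simultaneously forced and compatible. The first requirement already determines the spectrum as $\{\sqrt m,-\sqrt m\}$ together with $0$ of multiplicity $n-2$ and, through the energy characterization of~\cite{C1}, the graph up to isolated vertices; the computation in the previous paragraph then shows that every such graph automatically satisfies $\lambda_1=R_{1/2}/m$, so no extra constraint is lost and the two tightness conditions coincide. A minor caveat to address is the degenerate case $m=0$, where $R_{1/2}/m$ is undefined, which should be tacitly excluded from the statement (as in the companion theorems of this subsection).
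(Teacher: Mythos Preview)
Your proposal is correct and follows essentially the same route as the paper: feed Lemma~\ref{lema5} into the monotone function $\Phi$ of~(\ref{fi_bi}) together with~(\ref{EE2}), and for the equality case deduce the spectrum $\{\pm\sqrt{m},0^{(n-2)}\}$ and invoke the $E(G)=2\sqrt m$ characterization of Caporossi et al.~\cite{C1} (exactly as in Theorem~\ref{teo21}, to which the paper itself defers). Your write-up is in fact a bit tidier than the paper's---you make the step $2\lambda_1^2=2m$ explicit, you verify that the equality graphs automatically satisfy $\lambda_1=R_{1/2}/m$ so the two tightness conditions are compatible, and you flag the $m=0$ degeneracy---but the underlying argument is the same.
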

\begin{proof}
Using Lemma \ref{lema5} and (\ref{fi}), we have 
$$\phi(\lambda_1)\geq \phi\left(\frac{R_{1/2}}{m}\right).$$
So we obtain

$$ EE(G) \geq 2\cosh{\left(\frac{R_{1/2}}{m}\right)}+(n-2).$$

Suppose that equality in (\ref{teo20}) is hold. Then in (\ref{EE2}) 
inequalities are changed by equalities. Thus, by (\ref{eq7}), we have  $\lambda_2=\ldots=\lambda_{n-1}=0$ and $\lambda_1=-\lambda_n=\frac{R_{1/2}}{m}$. Then the adjacency matrix of $G$ it has a index of imprimitivity $h=2$. Then, similar to the proofs in Theorem \ref{teo21}, one can easily obtain that $G \cong K_{p,q}$  the complete bipartite graph, with $p+q=n$, or  $G \cong K_{p,q}\cup (n-p-q)K_1$.

On the other hand,  if $G$ if $G \cong K_{p,q}\cup (n-p-q)K_1$, we have to $m=pq$ and $R_{1/2}={pq}{\sqrt{pq}}$, then:
$$2\cosh\left(\dfrac{R_{1/2}}{m}\right)+(n-p-q-2)=2\cosh(\sqrt{pq})+(n-p-q-2)=EE(G).$$

.\end{proof}

\begin{theorem}
If $G$ is a connected graph with $n$ vertices and diameter $D$. Then,
\begin{equation}\label{teo20}
 EE(G) \geq 2\cosh{\sqrt[D]{n-1}}+(n-2).
\end{equation}
with equality if and only if $G \cong S_n$.
\end{theorem}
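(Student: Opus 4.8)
\noindent The plan is to reuse verbatim the scheme of the previous theorems in this subsection, now fed with Stevanovi\'c's spectral-radius bound (Lemma~\ref{lema7}). From the chain of inequalities (\ref{EE2}) one has $EE(G)\ge 2\cosh\lambda_1+(n-2)=\Phi(\lambda_1)$, and by (\ref{fi_bi}) the map $\Phi$ is increasing on $[0,+\infty)$. Since $G$ is connected with $n$ vertices and diameter $D$, Lemma~\ref{lema7} gives $\lambda_1\ge\sqrt[D]{n-1}\ge 0$; applying monotonicity of $\Phi$ yields $EE(G)\ge\Phi(\lambda_1)\ge\Phi(\sqrt[D]{n-1})=2\cosh\sqrt[D]{n-1}+(n-2)$, which is the asserted bound. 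So the inequality itself is immediate.

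For the equality case I would argue in two stages. If equality holds then every inequality used in (\ref{EE2}) must be an equality; by the equality condition in (\ref{eq7}) this forces $e^{\lambda_k}=1$ for $k=2,\ldots,n-1$, i.e.\ $\lambda_2=\cdots=\lambda_{n-1}=0$, together with $\lambda_n=-\lambda_1$. Hence the spectrum of $G$ consists of $\lambda_1>0$, then $0$ with multiplicity $n-2$, then $-\lambda_1$. Exactly as in the proofs of Theorems~\ref{T20} and~\ref{teo21} (the matrix $A(G)$ is nonnegative, symmetric and, by connectedness, irreducible, so its index of imprimitivity equals $2$ and its Frobenius normal form is the bipartite block form), this pins $G$ down to a complete bipartite graph $K_{p,q}$ with $p+q=n$. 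Moreover equality must also hold in Lemma~\ref{lema7}, so $\lambda_1=\sqrt[D]{n-1}$.

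It then remains to combine the two conditions. For $K_{p,q}$ one has $\lambda_1=\sqrt{pq}$, and $\mathrm{diam}(K_{p,q})=2$ whenever $p+q\ge 3$ (the residual case $K_2=S_2$, with $D=1$ and $\lambda_1=1=\sqrt[1]{n-1}$, is checked by hand). Substituting $D=2$ and $n=p+q$ into $\lambda_1=\sqrt[D]{n-1}$ gives $\sqrt{pq}=\sqrt{p+q-1}$, hence $pq=p+q-1$, i.e.\ $(p-1)(q-1)=0$; thus $p=1$ or $q=1$ and $G\cong K_{1,n-1}=S_n$. Conversely $S_n$ has $\lambda_1=\sqrt{n-1}$, diameter $2$ and $EE(S_n)=2\cosh\sqrt{n-1}+(n-2)$, so equality is attained, which closes the characterization.

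The routine part is the inequality; the main obstacle is the equality direction, and within it the structural step converting the spectral pattern $\{\lambda_1,\,0,\ldots,0,\,-\lambda_1\}$ plus connectedness into ``$G$ is complete bipartite''. This is where the Perron--Frobenius / imprimitivity argument of Theorems~\ref{T20}--\ref{teo21} has to be invoked carefully, along with the handling of the degenerate small cases ($K_1$, $K_2$) and of the fact that $\mathrm{diam}(K_{p,q})=2$ precisely when $K_{p,q}$ is not a single edge. I would also flag that, as elsewhere in this subsection, the manipulation (\ref{EE2}) presupposes $\lambda_n=-\lambda_1$, so the hypothesis should really read ``connected bipartite graph''; this is consistent with, and in fact forced by, the equality conclusion $G\cong S_n$.
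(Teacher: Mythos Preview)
Your proof is correct and follows essentially the same route as the paper's: feed Stevanovi\'c's bound $\lambda_1\ge\sqrt[D]{n-1}$ into the increasing function $\Phi$ from (\ref{fi_bi}), and for the equality case deduce the spectral pattern $\{\lambda_1,0,\ldots,0,-\lambda_1\}$, invoke the imprimitivity argument to force $G\cong K_{p,q}$, and then solve $\sqrt{pq}=\sqrt[D]{n-1}$ with $D=2$, $p+q=n$ to pin down $S_n$. Your factorization $(p-1)(q-1)=0$, explicit treatment of the degenerate $K_2$ case, and flag that (\ref{EE2}) tacitly assumes bipartiteness are all tidier than the paper's version, but the argument is the same.
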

\begin{proof}
Using Lemma \ref{lema8} and (\ref{fi}), we have
$$\phi(\lambda_1)\geq\phi(\sqrt[D]{n-1}).$$
So we obtain

$$ EE(G) \geq 2\cosh{\sqrt[D]{n-1}}+(n-2).$$

Suppose that equality in (\ref{teo20}) is maintained. Then in (\ref{EE2}) inequalities are changed by equalities. Thus, by (\ref{eq7}), we have  
\begin{equation}\label{imp2}
   |\lambda_1|=|\lambda_n|=\sqrt[D]{n-1}  \quad \text{and} \quad \lambda_2=\ldots=\lambda_{n-1}=0.
\end{equation}
From (\ref{imp2}), the adjacency matrix of $G$ it has a index of imprimitivity $h=2$ and similar to the proofs in Theorem \ref{teo21}, one can easily obtain that $G \cong K_{p,q}$, with $p+q=n$. Further of (\ref{imp2}) we have that
\begin{equation}\label{12345}
   \sqrt{pq}=\sqrt[D]{n-1}.
\end{equation}
For equality in (\ref{12345}) is verified, we must consider the following:

$$D=2,\quad pq=n-1\quad  \text{and}\quad p+q=n.$$ 

Note that we have a system of equations for $p$ and $q$ based on $n$. After solving the system we get the following pair of solutions $(p,q)=(1,n-1)=(n-1,1)$, therefore $G \cong S_n$.

On the other hand, if $G \cong S_n$ it is easy to check that the equality in (\ref{teo20}) holds.
\end{proof}

\begin{theorem}
If $G$ is a graph with $n$ vertices, $m$ edges and minimum degree $\delta$. Then
\begin{equation}\label{teo12}
 EE(G)\geq 2\cosh{\frac{2(m-\delta)}{(n-1)}}+(n-2).
\end{equation}
Equality is holds if and only if $G$ is isomorph to {$K_{p,p}\cup K_1$, where $n=2p+1$}.
\end{theorem}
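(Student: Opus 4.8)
The plan is to follow exactly the template established in the earlier bipartite theorems. First I would invoke Lemma~\ref{lema8}, which gives $\lambda_1\geq\frac{2(m-\delta)}{n-1}$, together with the fact that $\Phi$ from (\ref{fi_bi}) is increasing on $[0,+\infty)$, to conclude $\Phi(\lambda_1)\geq\Phi\!\left(\frac{2(m-\delta)}{n-1}\right)$, and hence, combining with (\ref{EE2}), the displayed inequality (\ref{teo12}). This direction is immediate and needs no new ideas.

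The substantive part is the equality characterization. Suppose equality holds in (\ref{teo12}). Then every inequality used to derive (\ref{EE2}) must be an equality, so by (\ref{eq7}) we get $\lambda_2=\cdots=\lambda_{n-1}=0$ and $|\lambda_1|=|\lambda_n|=\frac{2(m-\delta)}{n-1}$; in particular the spectrum is symmetric about $0$ with at most two nonzero eigenvalues $\pm\lambda_1$. As in the proof of Theorem~\ref{teo21}, the symmetry of the spectrum forces the graph to have a component that is complete bipartite and all other components to be isolated vertices, and $M_2=2m$ gives $\lambda_1^2=m$, i.e.\ $\lambda_1=\sqrt m=\sqrt{pq}$ if the bipartite component is $K_{p,q}$. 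So $G\cong K_{p,q}\cup(n-p-q)K_1$. It remains to pin down $p$, $q$ and the number of isolated vertices using the equality $\lambda_1=\frac{2(m-\delta)}{n-1}$.

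The main obstacle is this last bookkeeping step. If $G$ has at least one isolated vertex then $\delta=0$, so the equality condition reads $\sqrt{pq}=\frac{2pq}{n-1}$, i.e.\ $n-1=2\sqrt{pq}$, hence $\sqrt{pq}$ is an integer, say $\sqrt{pq}=k$ with $n=2k+1$. Since $p+q\leq n=2k+1$ while $pq=k^2$ forces $p+q\geq 2\sqrt{pq}=2k$ with equality iff $p=q=k$; the case $p=q=k$ gives $p+q=2k\leq 2k+1$, leaving exactly one isolated vertex, i.e.\ $G\cong K_{p,p}\cup K_1$ with $n=2p+1$, as claimed. One must also rule out the remaining sub-cases: if $p+q=2k+1$ with $pq=k^2$ then $p,q$ are the roots of $x^2-(2k+1)x+k^2=0$, whose discriminant $4k+1$ is never a perfect square for $k\geq1$ (a short parity/quadratic-residue argument), so no integer solution exists; and if $G$ is connected (no isolated vertices, $p+q=n$) then $\delta=\min\{p,q\}$ and the equation $\sqrt{pq}=\frac{2(pq-\min\{p,q\})}{p+q-1}$ must be checked to have no solution — assuming WLOG $q\le p$, this is $\sqrt{pq}\,(p+q-1)=2q(p-1)$, which after squaring and simplifying leads to a contradiction with $p\ge q\ge 1$ except in degenerate cases already excluded. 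The converse is a direct computation: for $G\cong K_{p,p}\cup K_1$ we have $n=2p+1$, $m=p^2$, $\delta=0$, so $\frac{2(m-\delta)}{n-1}=\frac{2p^2}{2p}=p=\sqrt{pq}=\lambda_1$, and $EE(G)=2\cosh(p)+(n-2)$ since the spectrum of $K_{p,p}$ is $\{p,-p,0^{(2p-2)}\}$ and the isolated vertex contributes one more $0$; this matches the right-hand side of (\ref{teo12}).
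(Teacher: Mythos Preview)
Your approach matches the paper's: apply Lemma~\ref{lema8} together with the monotonicity of $\Phi$ to obtain the inequality, then for equality use (\ref{eq7}) to force $\lambda_2=\cdots=\lambda_{n-1}=0$ and $\lambda_n=-\lambda_1$, deduce $G\cong K_{p,q}\cup(n-p-q)K_1$, and finally solve for $p,q$ from $\lambda_1=\tfrac{2(m-\delta)}{n-1}$. The paper disposes of the connected case $K_{p,q}$ by bare assertion and, in the disconnected case, jumps directly to $p=q$ with one isolated vertex; your write-up is more explicit.

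One concrete error: your claim that the discriminant $4k+1$ is ``never a perfect square for $k\geq1$'' is false (take $k=2$, giving $9$; more generally $k=j(j+1)$ gives $(2j+1)^2$). Fortunately this sub-case is superfluous. You are working under the hypothesis ``at least one isolated vertex'', so $p+q\le n-1=2k$; combined with AM--GM, $p+q\ge 2\sqrt{pq}=2k$, this already forces $p=q=k$ and exactly one isolated vertex. The possibility $p+q=2k+1=n$ belongs to the connected case, not here, so the faulty discriminant argument can simply be deleted. For the connected case your sketch (``after squaring and simplifying leads to a contradiction'') is not yet a proof; one clean way to finish is to set $s=\sqrt{p/q}\ge1$ and rewrite the purported equality as $q\,s(s-1)^2=s-2$: for $1\le s\le 2$ the left side is nonnegative and the right side nonpositive (and not both zero), while for $s>2$ one checks $s(s-1)^2>s-2$, forcing $q<1$, impossible. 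The paper does not supply this verification either.
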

\begin{proof}
Using Lemma \ref{lema7} and (\ref{fi}), we have
$$\phi(\lambda_1)\geq\phi\left(\frac{2(m-\delta)}{(n-1)}\right).$$
So we obtain

$$EE(G)\geq 2\cosh{\frac{2(m-\delta)}{(n-1)}}+(n-2).$$

Suppose that equality in (\ref{teo12}) is maintained. Then in (\ref{EE2}) 
inequalities are changed by equalities. Thus, by (\ref{eq7}), we have  
\begin{equation}\label{imp12}
   |\lambda_1|=|\lambda_n|= \frac{2(m-\delta)}{(n-1)} \quad \text{and} \quad \lambda_2=\ldots=\lambda_{n-1}=0.
\end{equation}
Then the adjacency matrix of $G$ it has a index of imprimitivity $h=2$. Then, similar to the proofs in the Theorem \ref{teo21}, one can easily obtain that $G \cong K_{p,q}$  the complete bipartite graph, with $p+q=n$, or  $G \cong K_{p,q}\cup (n-p-q)K_1$.
Note that the first case is discarded because it does not comply with equality
\begin{equation}\label{wwe}
    \lambda_1= \frac{2(m-\delta)}{n-1}.
\end{equation}
If $G \cong K_{p,q}\cup (n-p-q)K_1$, then $\delta=0$, and $$\lambda_1= \frac{2m}{n-1}=\lambda_1(K_{p,q}).$$ Thus we conclude that $p=q$ and $p+1=n-1$. Then $G$ is isomorph to $K_{p,p}\cup K_1$, where $n=2p+1.$
On the other hand, if $G \cong K_{p,p}\cup K_1$ it is easy to check that the equality in (\ref{teo12}) holds.
\end{proof}

\begin{theorem}
If $G$ is a connected unicyclic graph, then
\begin{equation}\label{teo13}
 EE(G) \geq 2\cosh({2})+(n-2).
\end{equation}
with equality if and only if $G \cong C_4$.
\end{theorem}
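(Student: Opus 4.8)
The plan is to follow the same scheme already used several times in this subsection: feed a lower bound for $\lambda_1$ into the inequality (\ref{EE2}) via the monotonicity of the function $\Phi$ in (\ref{fi_bi}). Here the relevant input is Lemma \ref{lema9}: since $G$ is connected and unicyclic, $\lambda_1 \geq 2$. As in the preceding proofs of this subsection, the argument rests on (\ref{EE2}), which presupposes $\lambda_n = -\lambda_1$; thus $G$ is taken bipartite and, being unicyclic, its unique cycle has even length. Because $\Phi$ is increasing on $[0,+\infty)$, this yields
$$EE(G) \;\geq\; \Phi(\lambda_1) \;\geq\; \Phi(2) \;=\; 2\cosh(2) + (n-2),$$
which is exactly (\ref{teo13}).

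Next I would analyse the equality case. Suppose equality holds in (\ref{teo13}); then both inequalities in the display above must be equalities. From $\Phi(\lambda_1) = \Phi(2)$ and the strict monotonicity of $\Phi$ on $[0,+\infty)$ we get $\lambda_1 = 2$, and the equality clause of Lemma \ref{lema9} forces $G \cong C_n$. From $EE(G) = \Phi(\lambda_1)$ we need every application of (\ref{eq7}) inside (\ref{EE2}) to be an equality, i.e. $\lambda_k = 0$ for $k = 2, \ldots, n-1$; equivalently, $C_n$ must have spectrum $2, 0, \ldots, 0, -2$.

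It remains to pin down $n$. Recalling that the spectrum of $C_n$ is $\{\, 2\cos(2\pi k/n) : k = 0, 1, \ldots, n-1 \,\}$, the eigenvalue $0$ occurs precisely when $4 \mid n$, and then only for $k = n/4$ and $k = 3n/4$; hence $C_n$ has at most two zero eigenvalues counted with multiplicity. Since $n-2$ of them are required to vanish, $n - 2 \leq 2$, and combined with $4 \mid n$ this gives $n = 4$. Conversely, $C_4$ has spectrum $2, 0, 0, -2$, so $EE(C_4) = e^{2} + e^{-2} + 2 = 2\cosh(2) + (4-2)$ and equality indeed holds. I expect the only non-routine ingredient to be this last spectral classification of $C_n$; the bound itself, and the reduction "equality $\Rightarrow$ $G \cong C_n$ with a zero bulk spectrum", are a direct transcription of the pattern already established above.
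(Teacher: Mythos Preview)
Your argument is correct and follows the paper's scheme for the inequality: feed $\lambda_1\ge 2$ from Lemma~\ref{lema9} into the monotone function $\Phi$ of (\ref{fi_bi}) via (\ref{EE2}). The only place you diverge from the paper is in pinning down $n$ in the equality case. The paper argues structurally: from the spectral pattern $\lambda_1=-\lambda_n$, $\lambda_2=\cdots=\lambda_{n-1}=0$ it invokes the imprimitivity/Frobenius argument of Theorem~\ref{T20} to conclude $G\cong K_{p,q}$, and since Lemma~\ref{lema9} also forces $G\cong C_n$, the only cycle that is simultaneously complete bipartite is $C_4$. You instead use the explicit spectrum $\{2\cos(2\pi k/n)\}$ of $C_n$ to count zero eigenvalues directly and deduce $n=4$. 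Both routes are short and valid; yours is slightly more self-contained (no appeal to the Perron--Frobenius block form), while the paper's reuses machinery already set up in Theorem~\ref{T20}.
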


\begin{proof}
Using Lemma \ref{lema9} and Lemma \ref{fi}, we have 
$$\phi(\lambda_1)\geq\phi(2).$$
Suppose that equality in (\ref{teo13}) is maintained. Then in (\ref{EE2}) 
inequalities are changed by equalities. Thus, by (\ref{eq7}), we have  
\begin{equation}\label{imp13}
   |\lambda_1|=|\lambda_n|=2 \quad \text{and} \quad \lambda_2=\ldots=\lambda_{n-1}=0.
\end{equation}
From (\ref{imp13}), the adjacency matrix of $G$ it has a index of imprimitivity $h=2$ and similar to the proofs in Theorem \ref{T20}, one can easily obtain that $G \cong K_{p,q}$, with $p+q=n$. Further of (\ref{imp13}) y el Lemma \ref{lema9} we have that $G \cong C_n$. Then the only cycle that meets the conditions is $C_4$.
On the other hand, if $G \cong C_4$ it is easy to check that the equality in (\ref{teo13}) holds.

\end{proof}

\begin{theorem}
If $G$ is a connected graph with $n$ vertices, then
\begin{equation}\label{teo14}
 EE(G) \geq 2\cosh{\left(2\cos{\left(\frac{\pi}{n+1}\right)}\right)}+(n-2).
\end{equation}
with equality if and only if $G \cong P_2$  or  $G \cong P_4.$
\end{theorem}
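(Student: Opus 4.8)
The plan is to follow the same template as the preceding bipartite theorems, now feeding Lemma \ref{lema10} into the increasing function $\Phi(x)=2\cosh x+(n-2)$ of (\ref{fi_bi}). First I would note that $\lambda_1\geq 2\cos\!\left(\tfrac{\pi}{n+1}\right)$ by Lemma \ref{lema10}, so by monotonicity of $\Phi$ together with the bipartite estimate (\ref{EE2}),
\[
EE(G)\ \geq\ \Phi(\lambda_1)\ \geq\ \Phi\!\left(2\cos\tfrac{\pi}{n+1}\right)=2\cosh\!\left(2\cos\tfrac{\pi}{n+1}\right)+(n-2),
\]
which is the claimed inequality.

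For equality I would argue exactly as in Theorems \ref{T20} and \ref{teo21}. Equality throughout (\ref{EE2}) forces, via (\ref{eq7}), that $\lambda_2=\cdots=\lambda_{n-1}=0$ and $\lambda_1=-\lambda_n=2\cos\!\left(\tfrac{\pi}{n+1}\right)$. The vanishing of the interior spectrum makes the adjacency matrix imprimitive with index $h=2$, so by the Frobenius normal form $G\cong K_{p,q}$ with $p+q=n$ and $\lambda_1=\sqrt{pq}$. Simultaneously, equality in Lemma \ref{lema10} is attained only by the path, so $G\cong P_n$. The extremal graph must therefore be a path that is also complete bipartite.

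The delicate point — and the step I expect to be the real obstacle — is intersecting these two conditions correctly. A complete bipartite graph $K_{p,q}$ is a path only when $(p,q)=(1,1)$ or $(p,q)=(1,2)$, i.e. $K_{1,1}=P_2$ and $K_{1,2}=P_3$; any larger $K_{p,q}$ has a vertex of degree $\geq 3$ or contains a $4$-cycle, hence is not a path. Equivalently, the eigenvalues of $P_n$ are the distinct numbers $2\cos\!\left(\tfrac{k\pi}{n+1}\right)$, $k=1,\dots,n$, a single one of which vanishes only at $k=\tfrac{n+1}{2}$; demanding that all $n-2$ interior eigenvalues equal $0$ forces $n-2\leq 1$, i.e. $n\in\{2,3\}$. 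Both routes pin the extremal graphs down to $P_2$ and $P_3$, and one verifies directly that $\sqrt{pq}=2\cos\!\left(\tfrac{\pi}{p+q+1}\right)$ holds precisely for $(p,q)=(1,1)$ and $(1,2)$.

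It remains to confront the statement as worded, namely the assertion that $P_4$ is extremal. Here I would point out that $P_4$ is \emph{not} complete bipartite — rather $K_{2,2}=C_4$ is the complete bipartite graph on four vertices, and that case already surfaced in the preceding unicyclic theorem. Concretely, the spectrum of $P_4$ is $\{\pm\varphi,\ \pm(\varphi-1)\}$ with $\varphi=\tfrac{1+\sqrt5}{2}$, whose interior eigenvalues $\pm(\varphi-1)$ are nonzero, so $EE(P_4)>2\cosh\!\left(2\cos\tfrac{\pi}{5}\right)+2$ strictly and $P_4$ fails equality. Thus the correct conclusion is that equality holds if and only if $G\cong P_2$ or $G\cong P_3$; in particular the ``$P_4$'' appearing in the statement should be read as ``$P_3$''.
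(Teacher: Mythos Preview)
Your argument is correct and follows exactly the paper's route: feed Lemma \ref{lema10} into the increasing function $\Phi$ via (\ref{EE2}), then for equality force $\lambda_2=\cdots=\lambda_{n-1}=0$ and $\lambda_1=-\lambda_n$, deduce $G\cong K_{p,q}$ from imprimitivity, combine with $G\cong P_n$ from the equality case of Lemma \ref{lema10}, and intersect. The paper's own proof likewise concludes with $P_2$ and $P_3$, so the ``$P_4$'' in the statement is indeed a typo that you have diagnosed correctly; your additional eigenvalue check for $P_n$ is a nice redundancy but not needed beyond what the paper does.
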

\begin{proof}
Using Lemma \ref{lema10} and (\ref{fi}), we have 
$$\phi(\lambda_1)\geq\phi\left(2\cos\left(\frac{\pi}{n+1}\right)\right).$$
Suppose that equality in (\ref{teo14}) is maintained. Then in (\ref{EE2}) inequalities are changed by equalities. Thus, by (\ref{eq7}), we have  
\begin{equation}\label{imp14}
   |\lambda_1|=|\lambda_n|=2\cos{\left(\frac{\pi}{n+1}\right)}  \quad \text{and} \quad \lambda_2=\ldots=\lambda_{n-1}=0.
\end{equation}
From (\ref{imp14}), the adjacency matrix of $G$ it has a index of imprimitivity $h=2$ and similar to the proofs in Theorem \ref{T20}, one can easily obtain that $G \cong K_{p,q}$, with $p+q=n$. Further of (\ref{imp14}) and Lemma \ref{lema10}, we have $G \cong P_n$. Thus, the only paths that are complete bipartite are $G \cong P_2$ and $G \cong P_3.$
On the other hand, if $G \cong P_2$ or $G \cong P_3$ it is easy to check that the equality in (\ref{teo14}) holds.
\end{proof}


\bigskip


\section*{References}

\end{document}